\newcommand{\Mod}[1]{\ (\mathrm{mod}\ #1)}
\newcommand{\F}{\mathcal{F}}
\newcommand{\M}{\mathcal{M}}
\newcommand{\bR}{\mathbb{R}}
\newcommand{\you}{\mathbf{u}}
\date{}
\title{Fractional $L$-intersecting families}
\author[1]{Niranjan Balachandran}
\author[2]{Rogers~Mathew}
\author[3]{Tapas Kumar Mishra}
\affil[1]
{
	Department of Mathematics, \authorcr
	Indian Institute of Technology, Bombay 400076, India. \authorcr
	niranj@iitb.ac.in
}
\affil[2]
{
	Department of Computer Science and Engineering, \authorcr
	Indian Institute of Technology, Kharagpur 721302, India. \authorcr
 rogers@cse.iitkgp.ernet.in
}
\affil[3]
{
Department of Computer Science and Engineering, \authorcr
National Institute of Technology, Rourkela 769008, India. \authorcr
mishrat@nitrkl.ac.in
}
\theoremstyle{definition}
\newtheorem{definition}{Definition}
\newtheorem{example}[definition]{Example}
\theoremstyle{plain}
\newtheorem{theorem}{Theorem}
\newtheorem{lemma}[theorem]{Lemma}
\newtheorem{conjecture}[theorem]{Conjecture}
\newtheorem{openproblem}[theorem]{Open problem}
\theoremstyle{remark}
\newtheoremstyle{plainitshape}
  {}
  {}
  {\itshape}
  {}
  {\itshape}
  {.}
  {0.5em}
  {}
\theoremstyle{plainitshape}
\newtheorem{claim}{Claim}[theorem]
\newtheoremstyle{cases}
  {}
  {}
  {}
  {}
  {}
  {\newline}
  {0.5em}
  {{\itshape \thmname{#1}} \thmnumber{#2} ({\itshape\thmnote{#3}}).\medskip}
\theoremstyle{cases}
\newtheoremstyle{constructions}
  {}
  {}
  {}
  {}
  {}
  {}
  {0.5em}
  {{\itshape \thmname{#1}} \thmnumber{#2}\medskip}
\theoremstyle{constructions}
\definecolor{ao(english)}{rgb}{0.0, 0.5, 0.0}
\newcommand{\rogers}[1]{{\color{black} #1}}
\newcommand{\niranjan}[1]{{\color{black} #1}}
\newcommand{\tapas}[1]{{\color{black} #1}}
\newcommand{\Pa}{\mathcal{P}}
\newcommand{\ep}{\varepsilon}
\begin{document}
\maketitle
\begin{abstract}
Let $L = \{\frac{a_1}{b_1}, \ldots , \frac{a_s}{b_s}\}$, where for every $i \in [s]$, $\frac{a_i}{b_i} \in [0,1)$ is an irreducible fraction. Let $\mathcal{F} = \{A_1, \ldots , A_m\}$ be a family of  subsets of $[n]$. We say $\mathcal{F}$ is a \emph{fractional $L$-intersecting family} if for every distinct $i,j \in [m]$, there exists an $\frac{a}{b} \in L$ such that $|A_i \cap A_j| \in \{ \frac{a}{b}|A_i|, \frac{a}{b} |A_j|\}$. In this paper, we introduce and study the notion of fractional $L$-intersecting families.  

\end{abstract}
\section{Introduction}
\label{sec:Intro}
Let $[n]$ denote $\{1, \ldots , n\}$ and let $L = \{l_1, \ldots , l_s\}$ be a set of $s$ non-negative integers. A family $\mathcal{F} = \{A_1, \ldots , A_m\}$ of subsets of $[n]$ is \emph{$L$-intersecting} if for every $A_i, A_j \in \mathcal{F}$, $A_i \neq A_j$, $|A_i \cap A_j| \in L$. In 1975, it was shown by Ray-Chaudhuri and Wilson in \cite{ray1975t} that if $\mathcal{F}$ is $t$-uniform, then $|\mathcal{F}| \leq {n \choose s}$. Setting $L = \{0, \ldots ,s-1\}$, the family $\mathcal{F} = { [n] \choose s}$ is a tight example to the above bound, where ${[n] \choose s}$ denotes the set of all $s$-sized subsets of $[n]$. In the non-uniform case, it was shown by Frankl and Wilson in the year 1981 (see \cite{frankl1981intersection}) that if we don't put any restrictions on the cardinalities of the sets in $\mathcal{F}$, then $|F| \leq {n \choose s} + {n \choose s-1} + \cdots + {n \choose 0}$.  This  bound is tight as demonstrated by the set of all subsets of $[n]$ of size at most $s$ with $L = \{0, \ldots s-1 \}$. The proof of this bound was using the method of higher incidence matrices. Later, in 1991, Alon, Babai, and Suzuki in \cite{alon1991multilinear}  gave an elegant linear algebraic proof to this bound. They showed that if the cardinalities of the sets in $\mathcal{F}$ belong to the set of integers $K = \{k_1, \ldots , k_r\}$ with every $k_i > s-r$, then $|\mathcal{F}|$ is at most ${n \choose s} + {n \choose s-1} + \cdots + {n \choose s-r+1}$. The collection of all the subsets of $[n]$ of size at least $s-r+1$ and at most $s$ with $K=\{s-r+1, \ldots , s\}$ and $L=\{0, \ldots , s-1\}$ \rogers{forms} a tight example to this bound. In 2002, this result was extended by Grolmusz and Sudakov \cite{grolmusz2002k} to $k$-wise $L$-intersecting families.  
In 2003, Snevily showed in \cite{snevily2003sharp} that if $L$ is a collection of $s$ positive integers then $|\mathcal{F}| \leq {n-1 \choose s} + {n-1 \choose s-1} + \cdots + {n-1 \choose 0}$. See \cite{liu2014set} for a survey on $L$-intersecting families and their variants.  

In this paper, we introduce a new variant of $L$-intersecting families called the fractional $L$-intersecting families. 
Let $L = \{\frac{a_1}{b_1}, \ldots , \frac{a_s}{b_s}\}$, where for every $i \in [s]$, $\frac{a_i}{b_i} \in [0,1)$ is an irreducible fraction. Let $\mathcal{F} = \{A_1, \ldots , A_m\}$ be a family of  subsets of $[n]$.
We say $\mathcal{F}$ is a \emph{fractional $L$-intersecting family} if for every distinct $i,j \in [m]$, there exists an $\frac{a}
{b} \in L$ such that $|A_i \cap A_j| \in \{ \frac{a}{b}|A_i|, \frac{a}{b} |A_j|\}$. When $\mathcal{F}$ is $t$-uniform, it is an $L'$-intersecting family where $L' = \{ \lfloor \frac{a_1t}{b_1} \rfloor , \ldots , \lfloor \frac{a_st}{b_s}
\rfloor \}$ and therefore (using the result in \cite{ray1975t}), \rogers{$|\mathcal{F}| \leq {n \choose s}$}. A 
tight example to this bound is given by the family $\mathcal{F} = {[n] \choose t}$ where $L = \{\frac{0}{t}, \ldots , \frac{t-1}{t}\}$. So what is interesting is finding a good upper bound for $|\mathcal{F}|$ in the non-uniform case. Unlike in the case of the classical $L$-intersecting families, it is clear from the above definition that if $A$ and $B$ are two sets in a fractional $L$-intersecting family, then the cardinality of their intersection is a function of $|A|$ or $|B|$ (or both).   

In Section \ref{sec:main_thm_proof}, we prove the following theorem which gives an upper bound for the cardinality of a fractional $L$-intersecting family in the general case. We follow the convention that ${a \choose b}$ is $0$, when $b > a$. 
\begin{theorem}
\label{thm:main}
Let $n$ be a positive integer. Let $L = \{\frac{a_1}{b_1}, \ldots , \frac{a_s}{b_s}\}$, where for every $i \in [s]$, $\frac{a_i}{b_i} \in [0,1)$ is an irreducible fraction. Let $\mathcal{F}$ be a fractional $L$-intersecting family of subsets of $[n]$. Then, $|\mathcal{F}| \leq 2{n \choose s} g^2(t,n)\ln(g(t,n)) + \left(\sum_{i=1}^{s-1} {n \choose i}\right)g(t,n)$, where $g(t,n) = \frac{2(2t + \ln n)}{\ln(2t + \ln n)}$ and $t = \max(s,\max(b_i:i \in [s])~)$. Further, 
\begin{enumerate}
\item[(a)] if $s \leq n+1 - 2g(t,n)\ln(g(t,n))$, then $|\mathcal{F}| \leq 2{n \choose s} g^2(t,n)\ln(g(t,n))$, and 
\item[(b)] \rogers{if $t > n-c_1$, where $c_1$ is a positive integer constant, then $|\mathcal{F}| \leq 2c_1 {n \choose s}g(t,n)\ln(g(t,n))$ $ + c_1\sum_{i=1}^{s-1} {n \choose i}$.} 
\end{enumerate} 
\end{theorem}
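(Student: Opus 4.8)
The plan is to partition $\mathcal{F}$ by set size, invoke Ray--Chaudhuri--Wilson inside each size class, and then exploit the fractional condition to show that only about $g^{2}(t,n)\ln g(t,n)$ size classes can be large at once.

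Write $\mathcal{F}=\bigcup_{k=0}^{n}\mathcal{F}_{k}$ with $\mathcal{F}_{k}=\{A\in\mathcal{F}:|A|=k\}$. For a fixed $k$, any two distinct members of $\mathcal{F}_{k}$ meet in a value from $L_{k}:=\{\tfrac{a_{i}}{b_{i}}k:\ i\in[s],\ b_{i}\mid k\}$, so $\mathcal{F}_{k}$ is a $k$-uniform $L_{k}$-intersecting family with $|L_{k}|\le s$; hence $|\mathcal{F}_{k}|\le\binom{n}{s}$ for $k\ge s$ by \cite{ray1975t}, while $\sum_{k=1}^{s-1}|\mathcal{F}_{k}|\le\sum_{i=1}^{s-1}\binom{n}{i}$ and $|\mathcal{F}_{0}|\le 1$ trivially. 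The naive sum only gives $|\mathcal{F}|\le(n+1)\binom{n}{s}$, which is off by roughly a factor $n/(g^{2}\ln g)$, so the real task is to bound how many size classes can simultaneously be close to $\binom{n}{s}$.

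The leverage is the cross-intersection condition: if $A,B\in\mathcal{F}$ with $|A|\neq|B|$, the witnessing fraction $\tfrac{a}{b}$ forces $b\mid|A|$ or $b\mid|B|$, and then $|A\cap B|\in L_{|A|}\cup L_{|B|}$, a set of at most $2s$ reals determined by the two sizes alone. I would split the sizes into about $g(t,n)$ consecutive blocks of width $\approx n/g$ and, for each block $I$, bound $|\mathcal{F}_{I}|:=\#\{A\in\mathcal{F}:|A|\in I\}$: all pairwise intersections in $\mathcal{F}_{I}$ lie in $\bigcup_{k\in I}L_{k}$, and for each member only the $\le s$ slopes applicable to its own size (plus those seen from a partner's size) are available. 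A dedicated linear-algebra / counting argument -- this being the step where the shape of $g$ is forced, since one is balancing block width against number of blocks and $g\ln g\asymp 2t+\ln n$ -- should give $|\mathcal{F}_{I}|\le 2\binom{n}{s}\,g\ln g$, with the sizes below $s$ counted crudely (at most $\sum_{i=1}^{s-1}\binom{n}{i}$) inside whichever block contains them. Summing over the $\le g$ blocks yields $2\binom{n}{s}g^{2}\ln g+\Big(\sum_{i=1}^{s-1}\binom{n}{i}\Big)g$.

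The hard part will be precisely this per-block estimate: a block of $n/g$ consecutive sizes could a priori carry $(n/g)\binom{n}{s}$ sets, and one must squeeze this to $\widetilde{O}(g)\binom{n}{s}$; neither Ray--Chaudhuri--Wilson nor Frankl--Wilson / Alon--Babai--Suzuki applies off the shelf, as $\mathcal{F}_{I}$ is non-uniform and $\bigcup_{k\in I}L_{k}$ can be as large as $s\,|I|$, so a genuinely new argument tuned to the fractional structure is needed. The refinements are then bookkeeping with the explicit $g$. For (a), the hypothesis $s\le n+1-2g\ln g$ with $g\ln g\asymp 2t+\ln n\ge 2s$ forces $s\lesssim n/5$, so the $\binom{n}{i}$ for $1\le i\le s-1$ are increasing and $\sum_{i=1}^{s-1}\binom{n}{i}\le 2g\ln g\binom{n}{s}$, letting the lower-order sum be absorbed into $2\binom{n}{s}g^{2}\ln g$. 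For (b), if $t>n-c_{1}$ then $g\ln g\asymp 4t>4(n-c_{1})$, so there are at most $n+1\le O(c_{1}g\ln g)$ size classes in total, and the plain union bound over size classes -- together with $\le c_{1}\sum_{i=1}^{s-1}\binom{n}{i}$ for the sizes below $s$ -- already yields $|\mathcal{F}|\le 2c_{1}\binom{n}{s}g\ln g+c_{1}\sum_{i=1}^{s-1}\binom{n}{i}$.
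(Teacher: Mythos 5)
Your decomposition by exact size and the Ray--Chaudhuri--Wilson bound inside each size class are fine, and your remark for part (b) --- that when $t>n-c_1$ the quantity $g\ln g$ is already of order $t$, so the total number of size classes is $O(c_1\, g\ln g)$ and a plain union bound over sizes suffices --- does essentially recover (b) by a shortcut. But the main bound and part (a) rest entirely on the per-block estimate $|\mathcal{F}_I|\le 2\binom{n}{s}g(t,n)\ln g(t,n)$ for a block $I$ of roughly $n/g$ consecutive sizes, and you do not prove it: you explicitly defer it to ``a dedicated linear-algebra / counting argument'' and acknowledge that no standard theorem applies. That estimate is the entire content of the theorem, so the proposal has a hole exactly where the difficulty lies. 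Moreover, consecutive-size blocks are the wrong structure: nothing in the fractional condition distinguishes a window of $n/g$ consecutive cardinalities, and there is no visible mechanism forcing such a block down from the trivial $(n/g)\binom{n}{s}$ to $\widetilde{O}(g)\binom{n}{s}$; the function $g$ does not arise from balancing block width against block count.

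The actual argument is arithmetic rather than interval-based. Fix a prime $p>t$ and split $\mathcal{F}$ by the residue of $|A|$ modulo $p$. Inside a class $\mathcal{F}_i$ with $i\not\equiv 0 \pmod p$, every pairwise intersection equals $\frac{a_l}{b_l}|A|$ for one of the two sets involved, hence is congruent to $\frac{a_l}{b_l}i$ mod $p$, and $p>t$ guarantees $\frac{a_l}{b_l}i\not\equiv i$. Thus the degree-$s$ polynomials $f_j(x)=\prod_{l}\bigl(\langle V_j,x\rangle-\frac{a_l}{b_l}i\bigr)$ satisfy the triangular criterion over $\mathbb{F}_p$, and the Alon--Babai--Suzuki swallowing trick gives $|\mathcal{F}_i|\le\binom{n}{s}+\binom{n}{i}$ (just $\binom{n}{s}$ under the hypothesis of (a), via the stronger swallowing lemma): an entire residue class, containing about $n/p$ distinct sizes, is compressed to essentially $\binom{n}{s}$, which is precisely what your consecutive blocks cannot deliver. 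The class $i\equiv 0$ is handled by repeating this for a set of $r-q$ primes just above $t$ whose product exceeds $n$, so that every member of $\mathcal{F}$ lands in a nonzero residue class for at least one of them; primorial estimates give $r-q\le g(t,n)$ primes and the prime number theorem gives $p_r\le 2g\ln g$, and summing over these primes produces the factors $g^2\ln g$ and $g$ in the statement (with only $c_1$ primes needed when $t>n-c_1$, which is the paper's route to (b)). Note also that in (a) the $\binom{n}{i}$ terms are eliminated by the stronger swallowing lemma, not absorbed; your proposed absorption could not yield the clean bound $2\binom{n}{s}g^2\ln g$ anyway. So the missing per-block lemma should be replaced by this mod-$p$ polynomial argument, or by a genuinely new idea of comparable strength.
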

Consider the following examples for a fractional $L$-intersecting family.
\begin{example} Let $L = \{\frac{0}{1}, \frac{1}{2}, \frac{1}{3}, \frac{2}{3}, \frac{1}{4}, \frac{3}{4},$ $\ldots , \frac{1}{n}, \ldots ,\frac{n-1}{n} \}$, where we omit fractions, like $\frac{2}{4}$, which are not irreducible. The collection of all the non-empty subsets of $[n]$ is a fractional $L$-intersecting family  of cardinality $2^n - 1$. Here, $|L| = s \in \Theta(n^2)$. Since $t \geq s$, we can apply Statement (b) of Theorem \ref{thm:main} to get an upper bound of $c_1 (2^n - 1)$ which is asymptotically tight. In general, when $L = \{\frac{0}{1}, \frac{1}{2}, \frac{1}{3}, \frac{2}{3}, \frac{1}{4}, \frac{3}{4},$ $\ldots , \frac{1}{n-c}, \ldots ,\frac{n-c-1}{n-c} \}$, where $c\geq 0$ is a constant, the set of all the non-empty subsets of $[n]$ of cardinality at most $n-c$ is \rogers{ an example which demonstrates that the bound given in Statement (b) of Theorem \ref{thm:main} is asymptotically tight}.    
\end{example}

\begin{example}
Let us now consider another example where $s~(= |L|)$ is a constant. Let $L = \{\frac{0}{s}, \frac{1}{s}, \ldots , \frac{s-1}{s}\}$. The collection of all the $s$-sized subsets of $[n]$ is a fractional $L$-intersecting family of cardinality ${n \choose s}$. In this case, the bound given by Theorem \ref{thm:main} is asymptotically tight up to a factor of $\frac{\ln^2 n}{\ln\ln n}$.  \rogers{We believe that if $\mathcal{F}$ is a fractional $L$-intersecting family of maximum cardinality, where $s$ ($=|L|$) is a constant, then $|\mathcal{F}| \in \Theta(n^s)$.}
\end{example}

\rogers{
Coming back to the classical $L$-intersecting families, it is known that when $\mathcal{F}$ is an $L$-intersecting family where $|L| = s = 1$, the Fisher's Inequality (see Theorem 7.5 in \cite{jukna2011extremal}) yields $|\mathcal{F}| \leq n$. Study of such intersecting families was initiated by Ronald Fisher in 1940 (see \cite{fisher1940examination}). This fundamental result of design theory is among the first results in the field of $L$-intersecting families. Analogously, consider the scenario when $L= \{\frac{a}{b}\}$ is a singleton set. Can we get a tighter (compared to Theorem \ref{thm:main}) bound in this case? We show in Theorem \ref{thm:singleton_b_is_a_prime} that if $b$ is a constant prime we do have a tighter bound. 
\begin{theorem}
\label{thm:singleton_b_is_a_prime}
\rogers{Let $n$ be a positive integer. Let $\mathcal{G}$ be a fractional $L$-intersecting families of subsets of $[n]$, where $L=\{\frac{a}{b}\}$, $\frac{a}{b} \in [0, 1)$, and $b$ is a prime. Then, $|\mathcal{G}| \leq (b-1)(n+1)\lceil \frac{\ln n}{\ln b} \rceil + 1$.} 
\end{theorem}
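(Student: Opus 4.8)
\emph{Proof proposal.} The plan is to combine a reduction to $b$-divisible sizes, a stratification of the sets by the $b$-adic valuation of their size, and, inside each stratum, a rank bound over $\mathbb{F}_b$ applied to a suitably rescaled Gram matrix.

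First I would carry out the reductions. Since $\tfrac ab$ is an irreducible fraction with $b$ prime, necessarily $1\le a\le b-1$ and $\gcd(a,b)=1$, so $\tfrac ab\neq 0$ (the value $a=0$ would force $b=1$). For distinct $A,B$ the number $|A\cap B|$ is an integer, so if $b\nmid|A|$ and $b\nmid|B|$ then neither $\tfrac ab|A|$ nor $\tfrac ab|B|$ is an integer and the intersection condition cannot hold; hence $\mathcal G$ contains at most one set whose size is not divisible by $b$. Writing $\mathcal G_0=\{A\in\mathcal G\setminus\{\emptyset\}:\ b\mid |A|\}$ we get $|\mathcal G|\le|\mathcal G_0|+2$, the two extra sets being $\emptyset$ and the possible ``bad'' set. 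For $v\ge1$ put $\mathcal G_0^{(v)}=\{A\in\mathcal G_0:\ v_b(|A|)=v\}$, where $v_b$ is the $b$-adic valuation; since $b^v\le|A|\le n$, only the values $v\in\{1,\dots,\lfloor\log_b n\rfloor\}$ occur, i.e.\ at most $\lceil \ln n/\ln b\rceil$ strata.

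The heart of the argument is the inequality $|\mathcal G_0^{(v)}|\le(b-1)(n+1)$ for every $v$. Fix $v$ and write $|A|=b^v m_A$ with $\gcd(m_A,b)=1$ for $A\in\mathcal G_0^{(v)}$. For distinct $A,B$ in this stratum the singleton fractional condition reads $|A\cap B|\in\{\,ab^{v-1}m_A,\ ab^{v-1}m_B\,\}$, and the decisive point is that both candidate values, and also $|A|=b^v m_A$, are divisible by $b^{v-1}$. Hence, if $V_0$ is the $|\mathcal G_0^{(v)}|\times n$ incidence matrix of $\mathcal G_0^{(v)}$, the matrix $M:=b^{-(v-1)}\,V_0V_0^{\mathsf T}$ is an \emph{integer} matrix whose diagonal entries are $bm_A$ and whose off-diagonal entries are $am_A$ or $am_B$. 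Since $M$ is integral, $\operatorname{rank}_{\mathbb Q}M=\operatorname{rank}_{\mathbb Q}(V_0V_0^{\mathsf T})=\operatorname{rank}_{\mathbb Q}V_0\le n$, and reduction of an integer matrix modulo a prime cannot raise its rank, we get $\operatorname{rank}_{\mathbb F_b}(M\bmod b)\le n$. Now split $\mathcal G_0^{(v)}$ into the $b-1$ classes $\mathcal C_j=\{A:\ m_A\equiv j\ (\mathrm{mod}\ b)\}$, $1\le j\le b-1$ (there is no class $j=0$, as $b\nmid m_A$). On the principal submatrix of $M\bmod b$ indexed by $\mathcal C_j$ every diagonal entry is $\equiv bm_A\equiv0$ and every off-diagonal entry is $\equiv aj$, so this submatrix equals $aj(J-I)$ over the field $\mathbb F_b$ with $aj\neq0$; hence its rank is at least $|\mathcal C_j|-1$. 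As a principal submatrix it has rank at most $\operatorname{rank}_{\mathbb F_b}(M\bmod b)\le n$, whence $|\mathcal C_j|\le n+1$ and $|\mathcal G_0^{(v)}|=\sum_{j=1}^{b-1}|\mathcal C_j|\le(b-1)(n+1)$.

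Summing over the at most $\lceil\ln n/\ln b\rceil$ strata gives $|\mathcal G_0|\le(b-1)(n+1)\lceil\ln n/\ln b\rceil$, and adding the at most two stray sets — with a little extra care, since the top stratum $v=\lfloor\log_b n\rfloor$ contributes only one set when $n$ is a power of $b$ — yields the stated bound. The step I expect to be the genuine obstacle is the choice of ring in which to run the rank argument. The naive route presents the $v$-th stratum as a family with a single restricted intersection value \emph{modulo $b^{v}$} and tries to invoke a modular Ray–Chaudhuri–Wilson type bound; but $b^v$ is only a prime \emph{power}, and there both the rank comparison $\operatorname{rank}_{\mathbb F_{p^k}}\le\operatorname{rank}_{\mathbb Q}$ and the clean rank of $J-I$ break down. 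The resolution, and the crux, is to exploit the surplus divisibility by $b^{v-1}$ and rescale the Gram matrix so that the whole argument runs over the \emph{prime} field $\mathbb F_b$; primality of $b$ is used precisely there, and in pinning down $\operatorname{rank}_{\mathbb F_b}(J-I)\ge N-1$.
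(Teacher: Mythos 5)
Your proposal is correct and follows essentially the same route as the paper: discard the at most one set whose size is not divisible by $b$, partition the rest by the $b$-adic valuation of $|A|$ and the residue of $|A|/b^{v}$ modulo $b$ (the paper's classes $\mathcal{F}_{b^k}^{ib^{k-1}}$), and bound each class by $n+1$ via a rank argument on the Gram matrix rescaled by $b^{v-1}$, using $b\nmid ai$. The only cosmetic difference is how the per-class rank bound is finished — you use $\operatorname{rank}_{\mathbb{F}_b}\bigl(aj(J-I)\bigr)\ge |\mathcal{C}_j|-1$ on the principal submatrix, while the paper shows $\det(B-aiJ)\not\equiv 0 \pmod b$ and compares with $\operatorname{rank}(B)+\operatorname{rank}(J)\le n+1$ — and your closing bookkeeping (the $+2$ versus $+1$, absorbed by the near-empty top stratum) indeed works out as you sketch.
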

Assuming $L= \{ \frac{1}{2}\}$, Examples \ref{examp:bisectionClosedEasy} and \ref{examp:Hadamard} in Section \ref{sec:singleton} give  fractional $L$-intersecting families on $[n]$ of cardinality $\frac{3n}{2} - 2$ thereby implying that the bound obtained in Theorem \ref{thm:singleton_b_is_a_prime} is asymptotically tight up to a factor of $\ln n$ when $b$ is a constant prime.    
We believe that the cardinality of such families is at most $cn$, where $c>0$ is a constant.   
}  

The rest of the paper is organized in the following way: In Section \ref{sec:main_thm_proof}, we give the proof of Theorem \ref{thm:main} after introducing some necessary lemmas in the beginning. In Theorem \ref{thm:singleton_large_sets_in_F} in Section \ref{sec:LargeSets}, we give  an upper bound of $n$ for fractional $L$-intersecting families on $[n]$ whose member sets are `large enough'. In Section \ref{sec:singleton}, we consider the case when $L$ is a singleton set and give the proof of Theorem \ref{thm:singleton_b_is_a_prime}. Later in this section, in Theorem \ref{thm:bound_restricted_size_case}, we consider the case when the cardinalities of the sets in the fractional $L$-intersecting family are restricted. Finally, we conclude with some remarks, some open questions, and a conjecture.

\rogers{\section{The general case}}
\subsection{Proof of Theorem \ref{thm:main}}
\label{sec:main_thm_proof}
Before we move to the proof of Theorem \ref{thm:main}, we introduce a few lemmas that will be used in the proof. 
\subsubsection{Few auxiliary lemmas}
The following lemma is popularly known as the `Independence Criterion' or \rogers{`Triangular Criterion'}. 
\begin{lemma}[Lemma 13.11 in \cite{jukna2011extremal}, \rogers{Proposition 2.5} in \cite{babai1992linear}]
\label{lem:diag_criterion}
For $i = 1, \dots , m$ let $f_i:\Omega \rightarrow \mathbb{F}$ be functions and $v_i \in \Omega$ elements such that 
\begin{enumerate}
\item[(a)]$f_i(v_i) \neq 0$ for all $1 \leq i \leq m$;
\item [(b)]$f_i(v_j) = 0$ for all $1 \leq j < i \leq m$. 
\end{enumerate}
Then $f_1, \ldots , f_m$ are linearly independent members of the space $\mathbb{F}^{\Omega}$. 
\end{lemma}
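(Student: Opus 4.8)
The plan is to prove the statement directly from the field axioms by exploiting the triangular vanishing pattern, with no extra machinery. Suppose for contradiction — or rather, to establish independence, suppose — that $c_1, \dots, c_m \in \mathbb{F}$ satisfy $\sum_{i=1}^m c_i f_i = 0$ as an element of $\mathbb{F}^\Omega$, i.e.\ $\sum_{i=1}^m c_i f_i(v) = 0$ for every $v \in \Omega$. I will show by induction on $k$ that $c_k = 0$ for each $k = 1, \dots, m$; since this forces the relation to be trivial, it proves that $f_1, \dots, f_m$ are linearly independent.

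For the inductive step, fix $k$ and assume $c_1 = \cdots = c_{k-1} = 0$ (this is vacuous when $k = 1$, so the same argument covers the base case). Evaluate the dependence relation at the point $v_k$, obtaining $\sum_{i=1}^m c_i f_i(v_k) = 0$. Now split the sum by the position of $i$ relative to $k$. For $i < k$, the coefficient $c_i$ already vanishes by the inductive hypothesis, so those terms are $0$. For $i > k$, hypothesis (b) applied with $j = k < i$ gives $f_i(v_k) = 0$, so those terms are $0$ as well. The only surviving term is $i = k$, leaving $c_k f_k(v_k) = 0$. Since $f_k(v_k) \neq 0$ by hypothesis (a) and $\mathbb{F}$ has no zero divisors, we conclude $c_k = 0$, completing the induction.

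Carrying this out for $k = 1, 2, \dots, m$ in increasing order of index shows that all $c_k = 0$, so the zero function is the only linear combination of $f_1, \dots, f_m$ that vanishes identically on $\Omega$; hence these functions are linearly independent in $\mathbb{F}^\Omega$, as claimed. There is no real obstacle here: the one point that requires care is that the evaluation points must be processed in the order $v_1, v_2, \dots, v_m$, so that at stage $k$ the already-eliminated coefficients $c_1, \dots, c_{k-1}$ together with the triangular zeros $f_i(v_k) = 0$ for $i > k$ conspire to isolate a single nonzero term. This is exactly why the lemma is a convenient black box: it is a clean finite induction that converts a combinatorial "diagonal" incidence condition into a dimension bound.
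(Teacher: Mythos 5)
Your proof is correct and is the standard argument for the Triangular (Independence) Criterion. The paper itself does not supply a proof of this lemma but merely cites it (Lemma 13.11 in Jukna; Proposition 2.5 in Babai--Frankl), and your induction over the evaluation points $v_1, v_2, \dots, v_m$ is exactly the canonical proof one finds in those references.
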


\begin{lemma}
\label{lem:swallow-1}
Let $p$ be a prime; $\Omega = \{0,1\}^n$. Let $f \in \mathbb{F}_p^\Omega$ and let $i \in \mathbb{F}_p$. For any $A \subseteq [n]$, let $V_A \in \{0,1\}^n$ denote its $0$-$1$ incidence vector and let $x_A = \Pi_{j\in A} x_j$. Assume $f(V_A) \neq 0$, for every $|A| \not\equiv i~\Mod p$. Then, the set of functions $\{x_Af~:~|A| \not\equiv i~\Mod p \mbox{  and } |A| < p\}$ is linearly independent in the vector space $\mathbb{F}_p^{\{0,1\}^n}$ over $\mathbb{F}_p$. 
\end{lemma}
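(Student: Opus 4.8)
The plan is to deduce this directly from the Triangular (Independence) Criterion of Lemma~\ref{lem:diag_criterion}, using as evaluation points the incidence vectors of the index sets themselves and ordering those sets by cardinality.

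First I would record the elementary evaluation identity: for $A,B\subseteq[n]$ we have $x_A(V_B)=\prod_{j\in A}(V_B)_j$, which equals $1$ when $A\subseteq B$ and $0$ otherwise, so that $(x_Af)(V_B)$ equals $f(V_B)$ if $A\subseteq B$ and $0$ otherwise. Then I would enumerate the index sets appearing in the lemma --- that is, all $A\subseteq[n]$ with $|A|\not\equiv i\Mod{p}$ and $|A|<p$ --- as $A_1,\dots,A_m$ in non-decreasing order of size, $|A_1|\le|A_2|\le\cdots\le|A_m|$, and set $f_k:=x_{A_k}f$ and $v_k:=V_{A_k}$. Condition~(a) of Lemma~\ref{lem:diag_criterion} holds since $f_k(v_k)=(x_{A_k}f)(V_{A_k})=f(V_{A_k})$, which is nonzero because $A_k$ satisfies $|A_k|\not\equiv i\Mod{p}$ and, by hypothesis, $f$ does not vanish on the incidence vector of any such set. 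For condition~(b), if $j<k$ then $|A_j|\le|A_k|$ while $A_j\neq A_k$, so $A_k\subseteq A_j$ is impossible (it would force $|A_k|=|A_j|$ and hence $A_k=A_j$); therefore $f_k(v_j)=0$. Lemma~\ref{lem:diag_criterion} then gives that $f_1,\dots,f_m$ are linearly independent in $\mathbb{F}_p^{\{0,1\}^n}$, which is exactly the claim.

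I do not anticipate a genuine obstacle here: the only choices that matter are taking the test vectors to be the $V_{A_k}$ (so that the factor $f$ contributes a nonzero scalar precisely where the hypothesis applies) and observing that a non-decreasing cardinality ordering makes the containment relation upper triangular. It is worth noting that the bound $|A|<p$ is not itself used in this argument --- only the congruence condition on $|A|$ enters, through the hypothesis on $f$ --- and it appears in the statement merely because that is the form in which the lemma will later be applied.
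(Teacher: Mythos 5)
Your proof is correct and follows essentially the same route as the paper: order the index sets by non-decreasing cardinality, observe that $x_{A}(V_{B})=0$ unless $A\subseteq B$ so the evaluation matrix is triangular with nonzero diagonal entries $f(V_{A})$, and conclude independence (the paper runs the first-nonzero-coefficient argument inline rather than invoking Lemma~\ref{lem:diag_criterion} explicitly, but the idea is identical). Your side remark that the restriction $|A|<p$ plays no role in the independence argument is also accurate.
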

\begin{proof}
Arrange every subset of $[n]$ of cardinality less than $p$ in a linear order, say $\prec$, such that $A \prec B$ implies $|A| \leq |B|$. For any two distinct sets $A$ and $B$, we know that $x_A(V_B)f(V_B) = 0$ when $|B| \leq |A|$, where $x_A(V_B)$ denote the evaluation of the function $x_A$ at $V_B$. Suppose $\sum_{A:|A| \not\equiv i~\Mod p,~|A|<p}\lambda_Ax_Af = 0$ has a non-trivial solution. Then, identify the first set, say $A_0$, in  the linear order $\prec$ for which $\lambda_{A_0}$ is non-zero. Evaluate the functions on either side of the above equation at $V_{A_0}$ to get $\lambda_{A_0} = 0$ which  is a contradiction to our assumption.    
\end{proof}
The following lemma is from \cite{babai1992linear} (see Lemma 5.38).

\begin{lemma}[Lemma 5.38 in \cite{babai1992linear}]
\label{lem:swallow-2}
Let $p$ be a prime; $\Omega = \{0,1\}^n$. Let $f \in \mathbb{F}_p^\Omega$ be defined as $f(x) = \sum_{i=1}^n x_i - k$. For any $A \subseteq [n]$, let $V_A \in \{0,1\}^n$ denote its $0$-$1$ incidence vector and let $x_A = \Pi_{j\in A} x_j$. Assume $0\leq s,k \leq p-1$ and $s+k \leq  n$. Then, the set of functions $\{x_Af~:~|A| \leq s-1\}$ is linearly independent in the vector space $\mathbb{F}_p^\Omega$ over $\mathbb{F}_p$. 
\end{lemma}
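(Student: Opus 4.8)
\noindent\emph{Proof sketch.} The idea is to translate everything into multilinear polynomials and then reduce the statement to a rank computation for inclusion matrices over $\mathbb{F}_p$. Identify $\mathbb{F}_p^{\{0,1\}^n}$ with the space of multilinear polynomials in $x_1,\dots,x_n$ over $\mathbb{F}_p$ (a multilinear polynomial is determined by its restriction to $\{0,1\}^n$). Reducing $x_Af$ modulo the ideal $(x_1^2-x_1,\dots,x_n^2-x_n)$ gives $x_Af\equiv (|A|-k)\,x_A+\sum_{i\notin A}x_{A\cup\{i\}}$, which I abbreviate as $(|A|-k)x_A+U(x_A)$, where $U$ is the linear ``raising'' operator $x_A\mapsto\sum_{i\notin A}x_{A\cup\{i\}}$. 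So it suffices to prove that the multilinear polynomials $(|A|-k)x_A+U(x_A)$, $|A|\le s-1$, are linearly independent.

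Given a dependence $\sum_{|A|\le s-1}c_Ax_Af\equiv 0$, set $g=\sum_Ac_Ax_A=g_0+g_1+\dots+g_{s-1}$, the $g_\ell$ being the homogeneous parts of $g$. Matching homogeneous components of degree $m$ turns the dependence into the recursion $(m-k)g_m+U(g_{m-1})=0$ for all $m\ge 0$. Since $0\le k\le p-1$, the scalars $m-k$ with $0\le m<k$ are units in $\mathbb{F}_p$, so the recursion forces $g_m=0$ for every $m<k$; if $k\ge s$ this already gives $g=0$, so assume $k\le s-1$. For $k<m\le s-1$ the scalars $m-k$ are again units (as $m-k\le p-2$), so $g_m$ is a nonzero scalar multiple of $U^{m-k}(g_k)$; and the degree-$s$ equation, using that $g$ has degree $\le s-1$, collapses to the single condition $U^{s-k}(g_k)=0$. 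Thus $g$ is completely determined by its degree-$k$ part, and the lemma reduces to the claim: a homogeneous multilinear polynomial $g_k$ of degree $k$ with $U^{s-k}(g_k)=0$ is zero.

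Using $U^{s-k}(x_A)=(s-k)!\sum_{B\supseteq A,\,|B|=s}x_B$ and that $(s-k)!$ is a unit mod $p$ (here $s-k\le s\le p-1$), the condition $U^{s-k}(g_k)=0$ says exactly that $\sum_{A\subseteq B,\ |A|=k}c_A=0$ for every $s$-subset $B$ of $[n]$; equivalently, the $\binom{n}{s}\times\binom{n}{k}$ inclusion matrix $W_{k,s}$ of the $k$-subsets versus the $s$-subsets of $[n]$ has linearly independent columns over $\mathbb{F}_p$. This is the step I expect to be the main obstacle, and it is where both hypotheses $s\le p-1$ and $s+k\le n$ are used. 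I would derive it from the diagonal (Smith normal) form of inclusion matrices due to R.~M.~Wilson: when $k+s\le n$, the nonzero invariant factors of $W_{k,s}$ are the integers $\binom{s-i}{k-i}$, $0\le i\le k$, with multiplicities $\binom{n}{i}-\binom{n}{i-1}$; since $s-i\le s\le p-1$, none of these binomial coefficients is divisible by $p$, so $W_{k,s}$ has full column rank $\binom{n}{k}$ over $\mathbb{F}_p$. Hence all $c_A$ with $|A|=k$ vanish, $g_k=0$, and therefore $g=0$. (Note that the plain triangular-criterion argument used for Lemma~\ref{lem:swallow-1} does not suffice here: it would require, for each $A$, a $0$--$1$ witness $V_C$ with $C\supseteq A$ and $f(V_C)=|C|-k\not\equiv 0\pmod p$, and for the sets $A$ with $|A|=k$ no such $C$ keeps the evaluation matrix triangular --- filling that gap is exactly what forces the extra hypothesis $s+k\le n$.)
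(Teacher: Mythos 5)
The paper never proves Lemma \ref{lem:swallow-2}: it is imported as Lemma 5.38 of Babai and Frankl \cite{babai1992linear}, so there is no in-paper argument to measure you against; your submission is a genuine (and correct) reconstruction rather than a parallel proof. I checked the steps: the multilinear reduction $x_Af\equiv(|A|-k)x_A+U(x_A)$, the degree-by-degree recursion $(m-k)g_m+U(g_{m-1})=0$ in which the scalars $m-k$ (for $m\neq k$, $0\le m\le s\le p-1$, $0\le k\le p-1$) and $(s-k)!$ are units of $\mathbb{F}_p$, the collapse of the degree-$s$ component to $U^{s-k}(g_k)=0$, and the identity $U^{s-k}(x_A)=(s-k)!\sum_{B\supseteq A,\,|B|=s}x_B$, which turns the lemma into the assertion that the $k$-versus-$s$ inclusion matrix has full column rank $\binom{n}{k}$ over $\mathbb{F}_p$ when $k\le s\le p-1$ and $k+s\le n$; your deduction of that from Wilson's diagonal form is sound, since every diagonal entry $\binom{s-i}{k-i}$, $0\le i\le k$, involves only integers below $p$ and is therefore a unit, so the mod-$p$ rank is the total multiplicity, which telescopes to $\binom{n}{k}$. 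Your closing observation is also correct: the plain triangular criterion of Lemma \ref{lem:swallow-1} breaks exactly at the sets of size $k$ (the unique size at most $s-1$ congruent to $k$ modulo $p$), which is why some inclusion-matrix input and the hypothesis $s+k\le n$ are genuinely needed. Two caveats on presentation: Wilson's theorem gives a diagonal form, not the Smith normal form, so the entries $\binom{s-i}{k-i}$ are not literally ``invariant factors'' (harmless here, since any integer diagonal form determines the $p$-rank); and Wilson's theorem is a much heavier hammer than the lemma requires --- the special case you need (full $p$-rank of the inclusion matrix under $s\le p-1$, $k+s\le n$) is classical, underlies the Frankl--Wilson modular theorems, and has short direct proofs, so if this argument were incorporated one should either cite that special case or prove it, whereas the paper's choice is simply to cite \cite{babai1992linear} and keep the lemma as a black box.
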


\subsubsection{The proof}
\begin{proof}
Let $p$ be a prime and let $p > t$. We partition $\mathcal{F}$ into $p$ parts, namely $\mathcal{F}_0, \ldots , \mathcal{F}_{p-1}$, where $\mathcal{F}_i = \{ A \in \mathcal{F}~:~|A| \equiv i~ (mod~ p)\}$. 
\subsubsection*{Estimating $|\mathcal{F}_i|$, when $i>0$.}
Let $\mathcal{F}_i = \{A_1, \ldots , A_m\}$ and let $V_1, \ldots , V_m$ denote their corresponding $0$-$1$ incidence vectors. Define $m$ functions $f_1$ to $f_m$, where each $f_j \in \mathbb{F}_p^{\{0,1\}^n}$, in the following way.
$$ f_j(x) = (\langle V_j, x\rangle - \frac{a_1}{b_1}i)(\langle V_j, x\rangle - \frac{a_2}{b_2}i)\cdots (\langle V_j, x\rangle - \frac{a_s}{b_s}i). $$

Note that since $|A_j|\equiv i \Mod p$, $\left \langle V_j,V_j \right \rangle \equiv i \Mod p$.
Since $p > t$, for every $l \in [s]$, $i \not\equiv \frac{a_l}{b_l}i \Mod p$ unless $i \equiv 0 \Mod p$.
So,
\begin{equation}
\label{eqn:IndepCrit}
f_j(x) \begin{cases}
\neq &0 \text{, if $x = V_j$} \\
= &0 \text{,  otherwise}.
\end{cases}
\end{equation}

So, $f_j$'s are linearly independent in the vector space ${\mathbb{F}_p}^{\{0,1\}^n}$ over ${\mathbb{F}_p}$
(by Lemma \ref{lem:diag_criterion}). Since $x = (x_1, x_2, \ldots , x_n) \in \{0,1\}^n$, $x_i^r = x_i$, for any positive integer $r$. Each $f_j$ is thus an appropriate linear combination of distinct monomials of degree at most $s$.
Therefore, $|\mathcal{F}_i| = m \leq \sum_{j=0}^s {n \choose j}$. We can  improve this bound by using the ``swallowing trick'' in a way similar to the way it is used in the proof of Theorem 1.1 in \cite{alon1991multilinear}. Let $f \in \mathbb{F}_p^{\{0,1\}^n}$ be  defined as $f(x) = \sum_{j \in [n]}x_j - i$. From Lemma \ref{lem:swallow-1}, we know that the set of functions $\{x_Af~:~|A| \not \equiv i \Mod p \mbox{ and } |A| < s \}$ is linearly independent in the vector space  $\mathbb{F}_p^{\{0,1\}^n}$ over $\mathbb{F}_p$. 
\begin{claim}
\label{claim:swallow}
$\{f_j~:~1 \leq j \leq m\} \cup \{x_Af~:~|A| \not \equiv i \Mod p \mbox{ and } |A| < s \}$ is a collection of functions that is linearly independent in the vector space  $\mathbb{F}_p^{\{0,1\}^n}$ over $\mathbb{F}_p$. 
\end{claim}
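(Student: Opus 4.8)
The plan is to prove the linear independence by the standard evaluation argument, using the incidence vectors $V_1,\dots,V_m$ of the sets in $\mathcal{F}_i$ as test points. Two facts do all the work: the $f_j$'s are ``diagonal'' on $\{V_1,\dots,V_m\}$ (this is \eqref{eqn:IndepCrit}), and --- this is the whole point of the swallowing trick --- every function $x_Af$, with $f(x)=\sum_{j\in[n]}x_j-i$, vanishes identically on $\{V_1,\dots,V_m\}$.

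I would first spell out the diagonal property, since this is where the fractional $L$-intersecting hypothesis and the choice $p>t$ enter. For $j=k$, $f_k(V_k)=\prod_{l=1}^s(i-\frac{a_l}{b_l}i)=i^s\prod_{l=1}^s\frac{b_l-a_l}{b_l}\neq 0$ in $\mathbb{F}_p$, since $i\not\equiv 0\Mod{p}$ and each of $b_l,b_l-a_l$ lies strictly between $0$ and $t<p$. For $j\neq k$, both $A_j,A_k\in\mathcal{F}_i$ give $|A_j|\equiv|A_k|\equiv i\Mod{p}$, while the fractional $L$-intersecting property yields an $\frac{a}{b}\in L$ with $|A_j\cap A_k|\in\{\frac{a}{b}|A_j|,\frac{a}{b}|A_k|\}$; since $p>t\geq b$, the integer $b$ is invertible in $\mathbb{F}_p$, so $\langle V_j,V_k\rangle=|A_j\cap A_k|\equiv\frac{a}{b}i\Mod{p}$, a root of $f_j$, whence $f_j(V_k)=0$. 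For the swallowing functions, $(x_Af)(V_k)=x_A(V_k)\cdot(|A_k|-i)=0$ for every $k$, because $|A_k|\equiv i\Mod{p}$.

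The claim then follows at once. Suppose $\sum_{j=1}^m\lambda_j f_j+\sum_A\mu_A\,x_Af=0$, the second sum over all $A$ with $|A|\not\equiv i\Mod{p}$ and $|A|<s$. Evaluating at $V_k$ kills the entire second sum and every term of the first except the $k$-th, leaving $\lambda_k f_k(V_k)=0$, so $\lambda_k=0$; ranging $k$ over $[m]$ wipes out the first sum. What remains is $\sum_A\mu_A\,x_Af=0$; since $|A|<s\leq t<p$ and $f(V_A)=|A|-i\neq 0$ for every $A$ with $|A|\not\equiv i\Mod{p}$, the family $\{x_Af:|A|\not\equiv i\Mod{p},\ |A|<s\}$ is a sub-collection of the one shown linearly independent in Lemma \ref{lem:swallow-1}, forcing every $\mu_A=0$.

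I do not expect a genuine obstacle; the only points needing care are that $p>t$ makes each denominator $b_l$ a unit in $\mathbb{F}_p$ (so the roots $\frac{a_l}{b_l}i$ are honest elements of $\mathbb{F}_p$ and the diagonal property is meaningful) and that the hypotheses of Lemma \ref{lem:swallow-1} still hold after restricting to $|A|<s$. Once the claim is in hand, a bound on $|\mathcal{F}_i|=m$ drops out by a dimension count, since after the usual multilinear reduction ($x_j^2=x_j$) all functions in the independent set lie in the span of the monomials $x_B$ with $|B|\leq s$.
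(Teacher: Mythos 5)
Your proof is correct and follows essentially the same route as the paper: evaluate a hypothetical vanishing linear combination at each $V_j$ (where the swallowing terms $x_Af$ die because $f(V_j)=|A_j|-i\equiv 0\Mod{p}$ and the diagonal property of Equation \eqref{eqn:IndepCrit} isolates $\lambda_j$), then finish off the $\mu_A$'s by Lemma \ref{lem:swallow-1}. The only difference is that you re-derive the diagonal property \eqref{eqn:IndepCrit}, which the paper has already established before stating the claim; that re-derivation is accurate, so there is no gap.
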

In order to prove the claim, assume $\sum_{j=1}^m\lambda_jf_j + \sum_{A:|A| \leq s-1,~|A| \not \equiv i \Mod p}\mu_Ax_Af = 0$ for some $\lambda_j,\mu_A \in \mathbb{F}_p$. Evaluating at $V_j$, all terms in the second sum vanish \rogers{(since $f(V_j)=0$)} and by Equation \ref{eqn:IndepCrit}, only the  term with subscript $j$ remains of the first sum. We infer that $\lambda_j = 0$, for every $j$. It then follows from Lemma \ref{lem:swallow-1} that every $\mu_A$ is zero thus proving the claim. 

Since each function in the collection of functions in Claim \ref{claim:swallow} can be obtained as a linear  combination of distinct monomials of degree at most $s$, we can infer that $m + \sum_{j \neq i, j=0}^{s-1}~{n \choose j} \leq \sum_{j=0}^s {n \choose j}$. We thus have 

\begin{equation}
\label{eqn:general_bound_F_i}
|\mathcal{F}_i| \leq \begin{cases}
 & {n \choose s} + {n \choose i} \text{, if $i<s$} \\
 &{n \choose s} \text{, otherwise} \\
\end{cases}
\end{equation}

Observe that $ i\leq p-1$. We will shortly see that the prime $p$ we choose is always at most $ 2g(t,n)\ln(g(t,n))$, where $g(t,n) = \frac{(2t + \ln n)}{\ln(2t + \ln n)}$. So if $s \leq n+1 - 2g(t,n)\ln(g(t,n))$, the condition $s+i \leq n$ (here $i$ stands for  the symbol $k$ in Lemma \ref{lem:swallow-2}) given in Lemma \ref{lem:swallow-2} is satisfied and therefore the more powerful Lemma \ref{lem:swallow-2} can be used instead of Lemma \ref{lem:swallow-1} while applying  the swallowing trick.    We can then claim that (proof  of this claim is similar to the proof of Claim \ref{claim:swallow} and is therefore omitted) 
$\{f_j~:~1 \leq j \leq m\} \cup \{x_Af~:~ |A| < s \}$ (, where $f(x) = \sum_{j=1}^n x_j - i$) is a collection of functions that is linearly independent in the vector space  $\mathbb{F}_p^{\{0,1\}^n}$ over $\mathbb{F}_p$ which can be obtained as a linear combination of distinct monomials of  degree at most $s$. It then follows that $|\mathcal{F}_i| \leq {n \choose s}$. 

In the rest of the proof, we shall assume the general bound for $|\mathcal{F}_i|$ given by Inequality \ref{eqn:general_bound_F_i}. (Using the ${n \choose s}$ upper bound for $|\mathcal{F}_i|$ in place of Inequality \ref{eqn:general_bound_F_i} when $s \leq n+1 - 2g(t,n)\ln(g(t,n))$ in the rest of the proof will yield the tighter bound for $|\mathcal{F}|$ given in Statement (a) in the theorem.)   

Observe that we still do not have an estimate of $|\mathcal{A}_0|$ since  $i \equiv \frac{a_l}{b_l}i \Mod p$ when $i\equiv 0 \Mod p$. To overcome this problem, 
consider the collection $P=\{p_{q+1},\ldots ,p_r\}$ of $r-q$ smallest primes with $p_q \leq t < p_{q+1} < \cdots< p_r$ ($p_j$ denotes the $j$-th prime; $p_1=2, p_2=3,$ and so on) such that for every $A \in \mathcal{F}$, there exists a prime $p \in P$ with $p \nmid {|A|}$. Note that if we repeat the steps done above for each $p \in P$,
we obtain the following  upper bound.
\begin{eqnarray*}
|\mathcal{F}| &  \leq & (p_{q+1}+ \cdots +p_r - (r-q) ){n \choose s} + (r-q)\sum_{j=1}^{s-1}{n \choose j} \nonumber \\
&  < & (r-q)\left(p_r{n \choose s} + \sum_{j=1}^{s-1}{n \choose j}\right)\nonumber \\
\end{eqnarray*}

To obtain a small cardinality set $P$ of the desired requirement,
we choose the minimum $r$ such that $p_{q+1}p_{q+2} \cdots p_r > n$. 
\rogers{If $t > n-c_1$, for some positive integer constant $c_1$, then $P=\{p_{q+1}, \ldots , p_{q+c_1}\}$ satisfies the desired requirements of $P$.} We thus have, 
\begin{eqnarray}
|\mathcal{F}| &  < \begin{cases}
 & c_1 \left(p_r{n \choose s} + \sum_{j=1}^{s-1}{n \choose j}\right) \text{, \rogers{if $t > n - c_1$ (here $c_1$ is a positive integer constant)}} \\
 & r \left(p_r{n \choose s} + \sum_{j=1}^{s-1}{n \choose j}\right) \text{, otherwise}
 \end{cases}
\label{eqn:upperBoundF}
\end{eqnarray}

The product of the first $k$ primes is the \emph{primorial function} $p_k\#$ and it is known that 
$p_k\#= e^{(1+o(1))k\ln k}$. Given a natural number $N$, let $N\#$ denote the product of all the primes less than or equal to $N$ (some call this the primorial function). It is known that $N\# = e^{(1+o(1))N}$.  
Since $\frac{p_r\#}{t\#}= p_{k+1}p_{k+2} \cdots p_r$, setting $\frac{e^{(1+o(1))r\ln r}}{e^{(1+o(1))t}} > n$, we get, $r \leq  \frac{2(2t + \ln n)}{\ln(2t + \ln n)} = g(t,n)$. 
Using the prime number theorem, the $r$th prime $p_r$ is at most $2r \ln r$. 
Thus, we have $p_r \leq 2g(t,n)\ln(g(t,n))$. Substituting for $r$ and $p_r$ in Inequality \ref{eqn:upperBoundF} gives the theorem.
\end{proof}

\subsection{When the sets in $\mathcal{F}$ are `large enough'}
\label{sec:LargeSets}
In the following theorem, we show that when  the sets in a fractional $L$-intersecting $\mathcal{F}$ are `large enough', then $|\mathcal{F}|$ is at most $n$. 

\begin{theorem}
\label{thm:singleton_large_sets_in_F}
\rogers{Let $n$ be a positive integer. Let $L= \{\frac{a_1}{b_1}, \ldots , \frac{a_s}{b_s}\}$, where for every $i \in [s]$, $\frac{a_i}{b_i} \in [0,1)$ is an irreducible fraction. Let $\frac{a}{b} = \max(\frac{a_1}{b_1}, \ldots , \frac{a_s}{b_s})$. Let $\mathcal{F}$ be a fractional $L$-intersecting family of subsets of $[n]$ such that for every $A \in \mathcal{F}$, $|A| > \alpha n$, where $\alpha = \max(\frac{1}{2}, \frac{4a - b}{2b})$. Then, $|\mathcal{F}| \leq n$.} 
\end{theorem}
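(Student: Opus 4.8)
The plan is to use the linear-algebra (polynomial) method, aiming for a Fisher-type bound of $n$. Let $\mathcal{F} = \{A_1, \ldots, A_m\}$ with incidence vectors $V_1, \ldots, V_m \in \mathbb{R}^n$. The hypothesis says that for every pair $i \neq j$ there is a fraction $\frac{a}{b} \in L$ with $|A_i \cap A_j| \in \{\frac{a}{b}|A_i|, \frac{a}{b}|A_j|\}$; in particular, since each such fraction is at most $\frac{a}{b} = \max_l \frac{a_l}{b_l} < 1$, we have $|A_i \cap A_j| \le \frac{a}{b}\max(|A_i|, |A_j|)$, while the largeness hypothesis $|A_i|, |A_j| > \alpha n$ forces the intersections to be relatively small compared to $|A_i|$ and $|A_j|$ simultaneously. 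The key quantitative consequence I would extract first is: for distinct $i, j$, $\langle V_i, V_j \rangle = |A_i \cap A_j| < |A_i|$ and $< |A_j|$ (strict, since $\frac{a}{b} < 1$), and moreover the "deficiency" $|A_i| - |A_i \cap A_j|$ is bounded below by $(1 - \frac{a}{b})|A_i| > (1-\tfrac{a}{b})\alpha n$. The precise choice $\alpha = \max(\tfrac12, \tfrac{4a-b}{2b})$ is presumably calibrated so that a certain quadratic/Gram-matrix positivity argument goes through.

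**The main argument.**

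I would try to show the $V_i$ (or suitable vectors built from them) are linearly independent in $\mathbb{R}^n$. The natural first attempt: consider the $m \times m$ Gram-type matrix $M$ with $M_{ij} = \langle V_i, V_j\rangle = |A_i \cap A_j|$, which equals $B B^\top$ for the incidence matrix $B$ whose rows are the $V_i$, hence has rank at most $n$. If I can show $M$ is nonsingular (e.g. positive definite, or strictly diagonally dominant after rescaling), then $m \le \operatorname{rank} M \le n$. Diagonal dominance is the cleanest route: the diagonal entry is $|A_i|$ and I want $|A_i| > \sum_{j \ne i} |A_i \cap A_j|$, which is hopeless directly for large $m$. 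So instead I would rescale: replace $M$ by $D^{-1/2} M D^{-1/2}$ where $D = \operatorname{diag}(|A_i|)$, giving diagonal $1$ and off-diagonal $\frac{|A_i \cap A_j|}{\sqrt{|A_i||A_j|}}$, and hope a clever splitting works. The more robust approach, and the one I would actually pursue, is to find explicit test vectors and apply Lemma \ref{lem:diag_criterion} (the Triangular Criterion): order $\mathcal{F}$ by nondecreasing cardinality, and for each $i$ define a function $f_i$ on $\{0,1\}^n$ or a vector in $\mathbb{R}^n$ — perhaps $f_i(x) = \langle V_i, x\rangle - c_i$ for a suitable threshold $c_i$ chosen strictly between $\max_{j < i} |A_i \cap A_j|$ and $|A_i|$ — so that $f_i(V_i) \ne 0$ but $f_i(V_j) = 0$ for $j < i$. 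The largeness hypothesis is exactly what guarantees such a separating threshold $c_i$ exists uniformly, and linearity in $x$ keeps everything inside an $n$-dimensional (or $(n+1)$-dimensional, then one trims) space, yielding $m \le n$.

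**The main obstacle.**

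The hard part will be making the separating-threshold idea work with a single linear functional per set: the condition $f_i(V_j) = 0$ for \emph{all} $j < i$ requires $|A_i \cap A_j|$ to take the \emph{same} value $c_i$ for every earlier $j$, which is far too strong. So the real content is avoiding this — either by a genuine Gram-matrix positive-definiteness computation (writing $M = D + (M - D)$ and bounding the spectral norm of the off-diagonal part via $\|M - D\|_2 \le$ (something $< \min_i |A_i|$), using that $|A_i \cap A_j|/\sqrt{|A_i||A_j|} \le \frac{a}{b} \cdot \frac{\max}{\sqrt{\cdot}}$ combined with $|A_i|, |A_j| > \alpha n$ and $|A_i \cap A_j| \le \min(|A_i|,|A_j|)$), or by a two-step polynomial argument where the functions $f_i$ are degree-one but one also adjoins the constant function and a few low-degree monomials and uses Lemma \ref{lem:swallow-1}-style swallowing. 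I expect the intended proof is the Gram-matrix route: one shows $M - \frac{a}{b} J'$ type corrections make $M$ strictly diagonally dominant after the $D^{-1/2}$ rescaling precisely when $|A_i| > \alpha n$ with the stated $\alpha$, and the seemingly odd value $\frac{4a - b}{2b}$ drops out of requiring $\frac{|A_i \cap A_j|}{\sqrt{|A_i||A_j|}}$-type quantities to be controlled. Verifying that this inequality chain closes with the given $\alpha$, and handling the boundary case where $\frac{4a-b}{2b} < \frac12$ (so $\alpha = \frac12$ governs), is where I would spend the bulk of the effort.
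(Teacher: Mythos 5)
Your proposal is a plan rather than a proof: both routes you sketch are left open, and the one you say you would actually pursue does not close. The paper's argument rests on a device you never consider, namely replacing the $0$-$1$ incidence vectors by $(+1,-1)$ incidence vectors $X_A$ (entry $+1$ on $A$, $-1$ off $A$). For distinct $A,B$ one computes $\langle X_A,X_B\rangle = n-2|A|-2|B|+4|A\cap B|$, which under the fractional condition equals $n-2|A|+\frac{4a_i-2b_i}{b_i}|B|$ or $n-2|B|+\frac{4a_i-2b_i}{b_i}|A|$; the hypothesis $|A|,|B|>\alpha n$ with $\alpha=\max\bigl(\frac12,\frac{4a-b}{2b}\bigr)$ is calibrated exactly so that every such inner product is strictly negative (this, not any control of $\frac{|A_i\cap A_j|}{\sqrt{|A_i||A_j|}}$, is where $\frac{4a-b}{2b}$ comes from), while $|A|>\frac n2$ gives $\langle X_A,u\rangle>0$ for the all-ones vector $u$. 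A hypothetical dependence $\sum_i\lambda_iX_{A_i}=0$ then cannot have all nonzero coefficients of one sign (pair with $u$), and moving the negative coefficients to the other side produces a vector $v$ with $\langle v,v\rangle<0$, a contradiction; hence the $X_{A_i}$ are linearly independent in $\mathbb{R}^n$ and $|\mathcal{F}|\le n$.

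The route you favor would fail quantitatively. With $0$-$1$ vectors the rescaled off-diagonal Gram entries $\frac{|A_i\cap A_j|}{\sqrt{|A_i||A_j|}}$ are not small: for instance with $L=\{\frac12\}$ and sets of size close to $n$ they are about $\frac12$, so neither strict diagonal dominance nor any spectral-norm bound on the off-diagonal block can hold once $m$ is comparable to $n$; likewise the perturbed-identity rank bound (Lemma \ref{lemma:alon}) only gives a linear bound when the off-diagonal entries are $O(1/\sqrt n)$, which is the hypothesis of Theorem \ref{thm:bound_restricted_size_case}, not of this theorem. Your triangular-criterion alternative you already, correctly, identify as unworkable. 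So the genuine missing idea is the sign trick: passing to $\pm1$ vectors converts ``large sets with fractional intersections'' into ``vectors in an open half-space with pairwise negative inner products,'' and that is what forces linear independence and the bound $n$.
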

\begin{proof}
Let $\mathcal{F} = \{A_1, A_2, \ldots , A_m\}$. For every $A_i \in \mathcal{F}$, we define its $(+1,-1)$-incidence vector as:
 	\begin{align}
	X_{A_i}(j)= \begin{cases}
	+1, \text{ if $j \in A_i$} \\
	-1, \text{ if $j \not\in A_i$}. 
	\end{cases}
	\end{align}
We prove the theorem by proving the following claim. 
\begin{claim}
$X_{A_1}, \ldots , X_{A_m}$ are linearly independent in the vector space $\mathbb{R}^n$ over $\mathbb{R}$. 
\end{claim}
Assume for contradiction that $X_{A_1}, \ldots , X_{A_m}$ are linearly dependent in the vector space $\mathbb{R}^n$ over $\mathbb{R}$. Then, we have some reals $\lambda_{A_1}, \ldots , \lambda_{A_m}$ where not all of them are zeroes such that 
\begin{equation}
\label{eqn:linIndep}
\lambda_{A_1}X_{A_1} + \cdots + \lambda_{A_m}X_{A_m} = 0. 
\end{equation} 
It is given that, for every $A_i \in \mathcal{F}$, $|A_i| > \frac{n}{2}$. Let $u = (1,1, \ldots , 1) \in \mathbb{R}^n$ be the all ones vector. Then, $\langle X_{A_i}, u \rangle > 0$, for every $A_i \in \mathcal{F}$. Therefore, if all non-zero $\lambda_{A_i}$s in Equation (\ref{eqn:linIndep}) are of the same sign, say positive, then the inner product of $u$ with the L.H.S of Equation (\ref{eqn:linIndep}) would be non-zero which is a contradiction. Hence, we can assume that not all $\lambda_{A_i}$s are of the same sign. We rewrite Equation (\ref{eqn:linIndep}) by moving all negative $\lambda_{A_i}$s to the R.H.S. Without loss of generality, assume $\lambda_{A_1}, \ldots , \lambda_{A_k}$ are \rogers{non-negative} and the rest are negative. Thus, we have 
\rogers{\[ v = \lambda_{A_1}X_{A_1} + \cdots + \lambda_{A_k}X_{A_k} = - (\lambda_{A_{k+1}}X_{A_{k+1}} + \cdots + \lambda_{A_m}X_{A_m}), \]}
where $v$ is a non-zero vector. 

For any two distinct sets $A, B \in \mathcal{F}$, \rogers{$\exists \frac{a_i}{b_i} \in L$ such that  
\begin{align}
	\left \langle X_A,X_B\right \rangle = \begin{cases}
	n-2|A| + \frac{4a_i - 2b_i}{b_i}|B|, \text{ if $|A \cap B| = \frac{a_i}{b_i}|B|$, }\\
	n-2|B| + \frac{4a_i - 2b_i}{b_i}|A|, \text{ otherwise (that is, if $|A \cap B| = \frac{a_i}{b_i}|A|)$. }
	\end{cases}
	\end{align}
Since $\frac{a}{b} = \max(\frac{a_1}{b_1}, \ldots , \frac{a_s}{b_s})$, we have $\left \langle X_A,X_B\right \rangle \leq n-2|A| + \frac{4a - 2b}{b}|B|$ or $\left \langle X_A,X_B\right \rangle \leq n-2|B| + \frac{4a - 2b}{b}|A|$. 	
Applying the fact that the cardinality of every set $S$ in $\mathcal{F}$ satisfies $\alpha n < |S| \leq n$, where $\alpha = \max(\frac{1}{2}, \frac{4a - b}{2b})$, we get $\langle X_A,X_B \rangle < 0$. This implies that $\langle v, v \rangle = \langle \lambda_{A_1}X_{A_1} + \cdots + \lambda_{A_k}X_{A_k}  , -(\lambda_{A_{k+1}}X_{A_{k+1}} + \cdots + \lambda_{A_m}X_{A_m}) \rangle < 0$ which is a contradiction.} This proves the claim and thereby the theorem. 
\end{proof}
 
\section{$L$ is a singleton set}
\label{sec:singleton}
\rogers{As explained in Section \ref{sec:Intro}, the Fisher's Inequality is a special case of the classical $L$-intersecting families, where $|L| = 1$. In this section, we study fractional $L$-intersecting families with $|L| = 1$; a fractional variant of the Fisher's inequality. }
\subsection{Proof of Theorem \ref{thm:singleton_b_is_a_prime}}
\emph{Statement of Theorem \ref{thm:singleton_b_is_a_prime}:} Let $n$ be a positive integer. Let $\mathcal{G}$ be a fractional $L$-intersecting families of subsets of $[n]$, where $L=\{\frac{a}{b}\}$, $\frac{a}{b} \in [0, 1)$, and $b$ is a prime. Then, $|\mathcal{G}| \leq (b-1)(n+1)\lceil \frac{\ln n}{\ln b} \rceil + 1$.
\begin{proof}
\rogers{It is easy to see that if $a=0$, then $|\mathcal{G}| \leq n$ with the set of all singleton subsets of $[n]$ forming a tight example to this bound. So assume $a \neq 0$. Let $\mathcal{F} = \mathcal{G}\setminus \mathcal{H}$, where $\mathcal{H} = \{A \in \mathcal{G}~:~b \nmid |A|\}$. From the definition of a fractional $\frac{a}{b}$-intersecting family it is clear that $|\mathcal{H}|\leq 1$. The rest of the proof is to show that $|\mathcal{F}| \leq (b-1)(n+1)\lceil \frac{\ln n}{\ln b} \rceil$. We do this by partitioning $\mathcal{F}$ into $(b-1)\lceil \log_b n \rceil$ parts and then showing that each part is of size at most $n+1$.} We define $F_i^j$ as 
	\[\mathcal{F}_i^j= \{A \in \mathcal{F}| |A| \equiv j \Mod i\}.\]
Since $b$ divides $|A|$, for every $A \in \mathcal{F}$, under this definition $\mathcal{F}$ can be partitioned into families $\mathcal{F}_{b^k}^{ib^{k-1}}$, where $2 \leq k \leq \lceil \log_b n \rceil$ and $1 \leq i \leq b-1$. We show that, for every $i \in [b-1]$ and for every $2 \leq k \leq \lceil \log_b n\rceil$, $|\mathcal{F}_{b^k}^{ib^{k-1}}| \leq n+1$.       	

In order to estimate $|\mathcal{F}_{b^k}^{ib^{k-1}}|$, for each $A \in \mathcal{F}_{b^k}^{ib^{k-1}}$, create a vector $X_A$ as follows:

\[X_A(j)=\begin{cases}
	\frac{1}{\sqrt{b^{k-2}}} \text{, if $j \in A$;}\\
	0 \text{, otherwise.}
	\end{cases} \]
	\rogers{Note that, for $A,B \in \mathcal{F}_{b^k}^{ib^{k-1}}$}
	\begin{align}
	\left \langle X_A,X_B\right \rangle \equiv \begin{cases}
	b \Mod {b^2}, \text{ if $A=B$, }\\
	ai \Mod b, \text{ if $A\neq B$, }
	\end{cases}
	\end{align} 
Let $|\mathcal{F}_{b^k}^{ib^{k-1}}| = m$. Let $M_{k,i}$ denote the $m \times n$ matrix formed by taking $X_A$s as rows for each $A \in \mathcal{F}_{b^k}^{ib^{k-1}}$. Then, $|\mathcal{F}_{b^k}^{ib^{k-1}}| \leq n+1$ can be proved by considering $B = M_{k,i} \times M_{k,i}^T$ and showing that $B - aiJ$ (, where $J$ is the $m \times m$ all 1 matrix, ) has full rank; determinant of $B - aiJ$ is non-zero since the only term not divisible by the prime $b$ in the expansion of its determinant comes from the product of all the diagonals (note that $a<b$, $i<b$, and since $b$ is a prime, we have $b \nmid ai$).
\end{proof}
We shall call $\mathcal{F}$ a \emph{bisection closed family} if $\mathcal{F}$ is a fractional $L$-intersecting family where $L=\{\frac{1}{2}\}$. We have two different constructions of families that are bisection closed and are of cardinality $\frac{3n}{2}-2$ on $[n]$. 
\begin{example} 
\label{examp:bisectionClosedEasy}
\rogers{Let $n$ be an even positive integer. 
Let $\mathcal{B}$ denote the collection of 2-sized sets that contain only 1 as a common element in any two sets}, i.e. $\{1,2\}, \{1,3\}, \ldots, \{1,n\}$; and
\rogers{let  $\mathcal{C}$ denote collection of 4-sized sets that contain only $\{1,2\}$ as common elements}, i.e. $\{1,2,3,4\},$ $ \{1,2,5,6\}, \ldots, \{1,2,n-1,n\}$.
It is not hard to see that $\mathcal{B} \cup \mathcal{C}$ is indeed bisection closed.
\end{example}
\begin{example}
\label{examp:Hadamard}
The second example of a bisection closed family of cardinality $\frac{3n}{2}-2$ comes from \emph{Recursive Hadamard matrices}.
A Recursive Hadamard matrix $H(k)$ of size $2^k \times 2^k$ can be obtained from $H(k-1)$ of size $2^{k-1} \times 2^{k-1}$ as follows
\begin{align*}
H(k)= \left[ \begin{array}{cc}
H(k-1) & H(k-1) \\
H(k-1)  & -H(k-1)  \\
\end{array} \right],
\end{align*}
where $H(0)=1$.
Now consider the matrix:
\begin{align*}
M(k)= \left[ \begin{array}{cc}
H(k-1) & H(k-1) \\
H(k-1)  & -H(k-1)  \\
H(k-1) & J(k-1) \\
\end{array} \right], \text{ where $J(k-1)$ denotes the $2^{k-1}\times 2^{k-1}$ all 1s' matrix}.
\end{align*}
Let $M'(k)$ be the matrix obtained from $M(k)$ by removing the \rogers{first and  the }$(2^k+1)$th rows and  replacing the -1's by 1's and 1's by 0's.
$M'(k)$ is clearly bisection closed and has cardinality $\frac{3n}{2}-2$, where $n=2^k$. 
\end{example}
\subsection{Restricting the cardinalities of the sets in $\mathcal{F}$}
When $L = \{\frac{a}{b}\}$, where $b$ is a prime, Theorem \ref{thm:singleton_b_is_a_prime} yields an upper bound of $O(\frac{b}{\log b} n \log n)$ for $|\mathcal{F}|$. \rogers{However, we believe that when $|L|=1$, the cardinality of any fractional $L$-intersecting family on $[n]$ would be at most $cn$, where $c>0$ is a constant. To this end, we show in Theorem \ref{thm:bound_restricted_size_case} that when the sizes of the sets in $\mathcal{F}$ are restricted, we can achieve this.}

The following lemma is crucial to the proof of Theorem \ref{thm:bound_restricted_size_case}. 
\begin{lemma}\cite{alon2009perturbed,CODENOTTI200089}
	Let $A$ be an $m \times m$ real symmetric matrix with $a_{i,i}=1$ and $|a_{i,j}| \leq \epsilon$ for all $i \neq j$. 
	Let $tr(A)$ denote the trace of $A$, i.e., the sum of the diagonal entries of A. 
	Let $rk(A)$ denote the rank of $A$. Then,
	\[rk(A) \geq \frac{(tr(A))^2}{tr(A^2)}\geq \frac{m}{1+(m-1)\epsilon^2}.\]
	\label{lemma:alon}
\end{lemma}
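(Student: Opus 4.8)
\textbf{Proof proposal for Lemma \ref{lemma:alon}.} The plan is to exploit the spectral theorem for real symmetric matrices together with the Cauchy--Schwarz inequality, and then plug in the hypotheses on the entries of $A$. First I would diagonalize: since $A$ is real symmetric, it has real eigenvalues $\lambda_1, \ldots, \lambda_m$ (with multiplicity), and the number of nonzero eigenvalues equals $rk(A)$. Relabel so that $\lambda_1, \ldots, \lambda_r$ are the nonzero ones, where $r = rk(A)$.

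Next I would establish the first inequality $rk(A) \geq (tr(A))^2/tr(A^2)$. Write $tr(A) = \sum_{i=1}^m \lambda_i = \sum_{i=1}^r \lambda_i$ and $tr(A^2) = \sum_{i=1}^m \lambda_i^2 = \sum_{i=1}^r \lambda_i^2$. Applying Cauchy--Schwarz to the vectors $(\lambda_1, \ldots, \lambda_r)$ and $(1, \ldots, 1) \in \bR^r$ gives
\[
(tr(A))^2 = \Bigl(\sum_{i=1}^r \lambda_i\cdot 1\Bigr)^2 \leq r \sum_{i=1}^r \lambda_i^2 = rk(A)\cdot tr(A^2),
\]
which rearranges to the claimed bound (note $tr(A^2) > 0$ as long as $A \neq 0$, which we may assume since otherwise the statement is vacuous).

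Finally I would substitute the entrywise hypotheses. Since $a_{i,i} = 1$ for all $i$, we have $tr(A) = m$. Since $A$ is symmetric, $tr(A^2) = \sum_{i,j} a_{i,j}^2 = \sum_i a_{i,i}^2 + \sum_{i \neq j} a_{i,j}^2 \leq m + m(m-1)\epsilon^2$, using $|a_{i,j}| \leq \epsilon$ off the diagonal. Combining,
\[
rk(A) \geq \frac{(tr(A))^2}{tr(A^2)} \geq \frac{m^2}{m + m(m-1)\epsilon^2} = \frac{m}{1 + (m-1)\epsilon^2},
\]
as desired.

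I do not expect any genuine obstacle here: the argument is entirely routine once one recalls that the rank of a real symmetric matrix is the count of nonzero eigenvalues. The only minor points to handle carefully are the degenerate case $A = 0$ (excluded or trivial) and making sure the Cauchy--Schwarz step is applied to the nonzero eigenvalues only, so that the factor on the right-hand side is $r = rk(A)$ rather than $m$.
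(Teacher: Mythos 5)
Your proposal is correct and follows essentially the same route as the paper: count nonzero eigenvalues, apply Cauchy--Schwarz to get $(tr(A))^2 \leq rk(A)\,tr(A^2)$, then substitute $tr(A)=m$ and $tr(A^2)\leq m+m(m-1)\epsilon^2$. Your version is in fact slightly more careful than the paper's, which writes $tr(A^2)=m+m(m-1)\epsilon^2$ with equality where an inequality is meant.
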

\begin{proof}
	Let $\lambda_1, \ldots,\lambda_m$ denote the eigenvalues of $A$. Since only $rk(A)$ eigenvalues of $A$ are non-zero, $(tr(A))^2= (\sum_{i=1}^m \lambda_i)^2= (\sum_{i=1}^{rk(A)} \lambda_i)^2 \leq rk(A)\sum_{i=1}^{rk(A)} \lambda_i^2 = rk(A) tr(A^2)$, where the inequality follows from the Cauchy-Schwartz Inequality. Thus, $rk(A) \geq \frac{(tr(A))^2}{tr(A^2)}$. Substituting $tr(A) = m$ and $tr(A^2) = m + m(m-1)\epsilon^2$ in the above inequality proves the theorem.	
\end{proof}

\begin{theorem}
\label{thm:bound_restricted_size_case}
Let $n$ be a positive integer and let $\delta > 1$. Let $\mathcal{F}$ be a fractional $L$-intersecting family of subsets of $[n]$, where $L = \{\frac{a}{b}\}$, $\frac{a}{b} \in [0,1)$ is an irreducible fraction and for every $A \in \mathcal{F}$, $|A|$ in an integer in the range $\left[\frac{b}{4(b-a)}n - \frac{b}{4a\delta}\sqrt{n}, \frac{b}{4(b-a)}n + \frac{b}{4a\delta}\sqrt{n}\right]$. Then, $|\mathcal{F}| < \frac{\delta^2}{\delta^2 - 1}n$. 
\end{theorem}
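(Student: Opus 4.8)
The plan is to deduce the bound from Lemma~\ref{lemma:alon} applied to a Gram matrix. Concretely, I would attach to each $A\in\mathcal{F}$ a unit vector $u_A\in\mathbb{R}^n$ so that $|\langle u_A,u_B\rangle|\le\frac{1}{\delta\sqrt{n}}$ whenever $A\neq B$. The Gram matrix $M=(\langle u_A,u_B\rangle)_{A,B\in\mathcal{F}}$ is then a real symmetric matrix with $1$'s on the diagonal, off-diagonal entries bounded in absolute value by $\varepsilon:=\frac{1}{\delta\sqrt{n}}$, and $\mathrm{rk}(M)\le n$ because the $u_A$ lie in $\mathbb{R}^n$. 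Lemma~\ref{lemma:alon} then gives $n\ge\mathrm{rk}(M)\ge\frac{|\mathcal{F}|}{1+(|\mathcal{F}|-1)\varepsilon^2}$, and since $n\varepsilon^2=\frac{1}{\delta^2}<1$ this rearranges to $|\mathcal{F}|\le\frac{n(1-\varepsilon^2)}{1-n\varepsilon^2}=\frac{\delta^2 n-1}{\delta^2-1}<\frac{\delta^2}{\delta^2-1}\,n$. So the whole game is the construction of the $u_A$, and the role of the prescribed interval for $|A|$ should be exactly to make this construction possible.

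For the construction I would take $u_A=\frac{1}{\sqrt{n}}X_A$, where $X_A\in\{+1,-1\}^n$ is the $\pm1$-incidence vector of $A$ (as in the proof of Theorem~\ref{thm:singleton_large_sets_in_F}); then $\|u_A\|=1$ automatically, and $\langle X_A,X_B\rangle=4|A\cap B|-2|A|-2|B|+n$. For distinct $A,B$ we have $|A\cap B|\in\{\frac{a}{b}|A|,\frac{a}{b}|B|\}$, so, up to swapping the roles of $A$ and $B$, $\langle X_A,X_B\rangle=n-2|A|+\frac{4a-2b}{b}|B|$. This affine function of $(|A|,|B|)$ vanishes when $|A|=|B|=\frac{b}{4(b-a)}n$, which is exactly the midpoint of the allowed interval; writing $|A|=\frac{b}{4(b-a)}n+x_A$ with $|x_A|\le\frac{b}{4a\delta}\sqrt{n}$, we get $\langle X_A,X_B\rangle=-2x_A+\frac{4a-2b}{b}x_B$, whose absolute value is at most $\frac{1}{4a\delta}\bigl(2b+|4a-2b|\bigr)\sqrt{n}$. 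When $\frac{a}{b}\ge\frac{1}{2}$ we have $|4a-2b|=4a-2b$ and this is exactly $\frac{\sqrt{n}}{\delta}$, so $|\langle u_A,u_B\rangle|\le\frac{1}{\delta\sqrt{n}}=\varepsilon$; the half-width $\frac{b}{4a\delta}\sqrt{n}$ of the interval is calibrated for precisely this, and then the first paragraph finishes the proof.

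The main obstacle is the regime $\frac{a}{b}<\frac{1}{2}$: there $|4a-2b|=2b-4a$ and the bound above becomes $\frac{(b-a)}{a\delta}\sqrt{n}>\frac{\sqrt{n}}{\delta}$, so the $\pm1$ vectors overshoot. The repair is to use the other two-valued vector that annihilates the $n$-order term of the inner product: the quadratic $a\gamma^2+2b\gamma+4(b-a)=0$ in $\gamma=$ (value on $A$) $-$ (value on $A^c$) has roots $\gamma=-2$ (the $\pm1$ choice) and $\gamma=-\frac{2(b-a)}{a}$, the latter giving the vector with entry $\frac{3a-2b}{a}$ on $A$ and $1$ on $A^c$. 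With this choice one computes $\langle u_A,u_B\rangle=\frac{2(b-a)}{a}\bigl(-x_A+\frac{b-2a}{b}x_B\bigr)$ and $\|u_A\|^2=\frac{(b-a)^2}{a^2}n+O(\sqrt{n})$, so after normalising each $u_A$ individually the pairwise inner products are $\frac{1}{\delta\sqrt{n}}\bigl(1+O(1/\sqrt{n})\bigr)$. Equivalently — and perhaps more cleanly — one can skip the normalisation and apply the inequality $\mathrm{rk}(N)\ge(\mathrm{tr}\,N)^2/\mathrm{tr}(N^2)$, which is the first half of Lemma~\ref{lemma:alon} and needs only symmetry, to the matrix $N$ with $N_{AB}=|A\cap B|-\frac{a}{2b}(|A|+|B|)$: here $N_{AA}=\frac{b-a}{b}|A|$, $|N_{AB}|\le\frac{a}{2b}\bigl||A|-|B|\bigr|\le\frac{\sqrt{n}}{4\delta}$ for $A\neq B$, and $N$ is the intersection matrix minus a matrix of rank at most two, so $\mathrm{rk}(N)\le n+2$; the contribution of the off-diagonal entries to $\mathrm{tr}(N^2)$ is then controlled by the spread of the sizes $|A|$, which is $O(\sqrt{n})$ by hypothesis. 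Whichever route one takes, the delicate point — and where I expect the real work to lie — is showing that these lower-order errors (the $O(\sqrt{n})$ in $\|u_A\|^2$, the $+O(1)$ in the rank, or the variance of the sizes) do not degrade the final estimate below $\frac{\delta^2}{\delta^2-1}\,n$, i.e. that one genuinely lands on the clean constant and not on $\frac{\delta^2}{\delta^2-1}\,n+o(n)$.
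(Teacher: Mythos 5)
Your first two paragraphs are precisely the paper's proof: it uses the same vectors $Y_A=\frac{1}{\sqrt{n}}X_A$, forms the matrix with these as rows, notes that the resulting Gram matrix has unit diagonal, off-diagonal entries of absolute value at most $\frac{1}{\delta\sqrt{n}}$, and rank at most $n$, and then applies Lemma~\ref{lemma:alon} and rearranges exactly as you do; so up to that point you have reproduced the published argument in full. The only divergence is your third paragraph: the paper makes no case distinction on $\frac{a}{b}$ --- after displaying Equation~\ref{eqn:DotProd} it simply asserts the $\frac{1}{\delta\sqrt{n}}$ bound on the off-diagonal entries, which, as your calculation shows, is exactly what the prescribed interval delivers when $\frac{a}{b}\ge\frac{1}{2}$, whereas for $\frac{a}{b}<\frac{1}{2}$ the same estimate only gives $\frac{b-a}{a}\cdot\frac{1}{\delta\sqrt{n}}$ (so the argument as written would there need $\delta>\frac{b-a}{a}$ and would yield a weaker constant). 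Thus the obstacle you flag for $2a<b$ is an imprecision in the paper's own proof rather than a step you failed to supply; your proposed repairs (the re-weighted two-valued vectors, or the trace-ratio half of Lemma~\ref{lemma:alon} applied to the shifted intersection matrix of rank at most $n+2$) go beyond anything in the paper and, as you yourself note, are only sketched --- carried out, they appear to give the bound only up to lower-order corrections, e.g.\ $\frac{\delta^2}{\delta^2-1}n+O(\sqrt{n})$, rather than the clean inequality. For matching the published proof, your first two paragraphs already suffice.
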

\begin{proof}
For any $A \in \mathcal{F}$, let $Y_A \in \mathbb{R}^n$ be a vector defined as:
\begin{align*}
	Y_{A}(j)= \begin{cases}
	+\frac{1}{\sqrt{n}}, \text{ if $j \in A$} \\
	-\frac{1}{\sqrt{n}}, \text{ if $j \not\in A$}. 
	\end{cases}
	\end{align*}
Clearly, $\langle Y_A, Y_A \rangle = 1$. For any two distinct sets $A, B \in \mathcal{F}$, we have 
\begin{align}
\label{eqn:DotProd}
	\left \langle Y_A,Y_B\right \rangle = \begin{cases}
	\frac{n-2|A| + \frac{4a - 2b}{b}|B|}{n}, \text{ if $|A \cap B| = \frac{a}{b}|B|$, }\\
	\frac{n-2|B| + \frac{4a - 2b}{b}|A|}{n}, \text{ otherwise (that is, if $|A \cap B| = \frac{a}{b}|A|)$. }
	\end{cases}
	\end{align}
Suppose $\mathcal{F} = \{A_1, \ldots , A_m\}$. Let $B$ be the $m \times n$ matrix with $Y_{A_1}, \ldots , Y_{A_m}$ as its rows. Then, from Equation \ref{eqn:DotProd}, it follows that $BB^T$ is an $m \times m$ real symmetric matrix with the diagonal entries being $1$ and the absolute value of any other entry being at most $\frac{1}{\delta \sqrt{n}}$. Applying Lemma \ref{lemma:alon}, we have $n \geq rk(BB^T) \geq \frac{m}{1 + \frac{m-1}{\delta^2n}} > \frac{m}{1 + \frac{m}{\delta^2n}}$. Thus, $n + \frac{m}{\delta^2} > m$ or $m < \frac{\delta^2}{\delta^2 - 1}n$. 
\end{proof}

\section{Discussion}
\rogers{ In Theorem \ref{thm:main}, we gave a general upper bound for $|\mathcal{F}|$, where $\mathcal{F}$ is a fractional $L$-intersecting family. In Section \ref{sec:Intro}, we also gave an example to show that this bound is asymptotically tight up to a factor of $\frac{\ln^2 n}{\ln \ln n}$, when $s$ ($= |L|$) is a constant. However, when $s$ is a constant, we believe  that $|\mathcal{F}| \in \Theta(n^s)$. 

Consider the following special case for a fractional $L$-intersecting family $\mathcal{F}$, where $L=\{\frac{1}{2}\}$. We call such a family a bisection-closed family (see definition in Section \ref{sec:singleton}). 
\begin{conjecture}
If  $\mathcal{F}$ is a bisection-closed  family, then $|\mathcal{F}| \leq cn$, where $c>0$ is a constant. 
\end{conjecture}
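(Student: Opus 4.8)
The plan is to isolate the one place where the current tools lose a logarithmic factor and attack that directly. First I would carry out the standard reductions: since a set of odd cardinality can share no half-size intersection with another odd set, $\mathcal{F}$ contains at most one set of odd size, so we may assume every $A\in\mathcal{F}$ has $v_2(|A|)=\ell(A)\ge 1$ and partition $\mathcal{F}=\bigcup_{\ell\ge 1}\mathcal{G}_\ell$ with $\mathcal{G}_\ell=\{A\in\mathcal{F}:v_2(|A|)=\ell\}$. By Theorem~\ref{thm:singleton_b_is_a_prime} with $b=2$ we already have $|\mathcal{G}_\ell|\le n+1$, hence $|\mathcal{F}|=O(n\log n)$; the sets of size exceeding $n/2$ contribute at most $n$ in total by Theorem~\ref{thm:singleton_large_sets_in_F}, and the sets whose size lies in an $O(\sqrt n)$ window around $n/2$ contribute $O(n)$ by Theorem~\ref{thm:bound_restricted_size_case}. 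So the conjecture reduces to bounding, by $O(n)$, the sets whose cardinalities are small relative to $n$ and spread over many $2$-adic levels.

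For the linear-algebraic route I would try to build, for every $A\in\mathcal{F}$ simultaneously (not level by level), a single vector $Y_A$ together with auxiliary coordinates that encode $\ell(A)$, arranged so that $\langle Y_A,Y_B\rangle$ is forced to vanish, or to be small, whenever $\ell(A)\ne\ell(B)$ — effectively a global analogue of the ``swallowing trick'' used in the proof of Theorem~\ref{thm:main}. If the cross-level inner products can be made $o(1/\sqrt n)$ while the within-level structure is preserved, Lemma~\ref{lemma:alon} applied to the full Gram matrix $BB^T$ gives $|\mathcal{F}|=O(n)$ in one stroke. The obstacle here is concrete: with the natural $\pm 1/\sqrt n$ vectors the product of two sets differing in size by a factor much larger than $2$ is bounded away from $0$ (this is exactly why Theorem~\ref{thm:bound_restricted_size_case} is restricted to an $O(\sqrt n)$-wide band), so a genuinely new choice of test vectors — or a modular rank argument over a well-chosen ring that sees all levels at once — would be needed.

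Alternatively I would pursue a combinatorial decomposition exploiting the ``half-containment'' phenomenon: if $|A|<|B|/2$ then $|A\cap B|\le|A|<|B|/2$ forces $|A\cap B|=|A|/2$, so $A$ is half-contained in $B$ and its intersection pattern with $B$ is rigid. Decompose $\mathcal{F}$ into chains under the relation ``size at most one half'', bound the number of chains, and show that along a chain each set roughly bisects the cut induced by the next, so that a chain meeting $\omega(1)$ distinct $2$-adic levels in large pieces is impossible. Combined with the three boundary estimates above, it would suffice to prove a structural lemma of the form: \emph{in any bisection-closed family, at most $O(1)$ of the pieces $\mathcal{G}_\ell$ can have size $\omega(1)$}, i.e. an extremal family lives on $O(1)$ scales.

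The main obstacle, in either approach, is the same and I expect it to be hard: every available method — modular rank bounds, Ray-Chaudhuri--Wilson-type arguments, and the perturbed-identity lemma (Lemma~\ref{lemma:alon}) — currently loses a $\log n$ factor precisely when member sets have cardinalities differing by more than a bounded factor, because the intersection size $\tfrac12|A|$ varies with $A$ rather than being a single fixed value. The families in Examples~\ref{examp:bisectionClosedEasy} and~\ref{examp:Hadamard} attain $\tfrac{3n}{2}-2$ using only three scales, which is strong evidence that the extremal behaviour is concentrated on $O(1)$ levels; converting that heuristic into the scale-concentration lemma above is the single step I do not see how to push through with existing techniques.
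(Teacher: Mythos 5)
The statement you are addressing is a \emph{conjecture}: the paper does not prove it, and your proposal does not prove it either, as you yourself acknowledge. Your preliminary reductions are essentially correct: a bisection-closed family has at most one set of odd size; each $2$-adic level class has at most $n+1$ members (note this comes from the rank argument \emph{inside} the proof of Theorem~\ref{thm:singleton_b_is_a_prime}, since applying the statement of that theorem to a single level $\mathcal{G}_\ell$ would again only give $O(n\log n)$); the sets of size greater than $n/2$ number at most $n$ by Theorem~\ref{thm:singleton_large_sets_in_F}; and the sets whose size lies within $O(\sqrt n)$ of $n/2$ number $O(n)$ by Theorem~\ref{thm:bound_restricted_size_case}. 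But all of this combined still only reproduces the $O(n\log n)$ bound already available from Theorem~\ref{thm:singleton_b_is_a_prime} with $b=2$; the entire content of the conjecture is the removal of the $\log n$ factor, and that is exactly the step your proposal leaves open. Neither of your two routes supplies an argument for the needed ``scale-concentration lemma'' (at most $O(1)$ levels are large) or for making cross-level inner products $o(1/\sqrt n)$; indeed the latter is obstructed for the reason you note, e.g.\ if $A$ is half-contained in a much larger set $B$ then with the $\pm 1/\sqrt n$ vectors $\langle Y_A,Y_B\rangle=(n-2|B|)/n$, which is bounded away from $0$ when $|B|=o(n)$. So there is a genuine gap, and it coincides with the open problem itself.

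For comparison, the paper's own stance is to leave the conjecture open and to \emph{reduce} it to a rank problem rather than to a combinatorial scale-concentration statement: taking the $\pm 1$ incidence vectors $\mathbf{u}_A$, the Gram matrix $M$ with $M_{A,B}=\langle\mathbf{u}_A,\mathbf{u}_B\rangle$ has rank at most $n$, and $\mathcal{X}=\frac{1}{2}(nJ-M)$ is a symmetric matrix with zero diagonal and off-diagonal entries in $\{|A|,|B|\}$, of rank at most $n+1$. Hence a positive answer to the paper's open problem (every $M\in\mathcal{M}_n(a_1,\ldots,a_n)$ has rank at least $cn$) immediately yields $|\mathcal{F}|\leq c^{-1}(n+1)$. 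This is a sharpened version of your ``single global Gram matrix'' idea: instead of engineering test vectors so that cross-level products vanish, one accepts the Gram matrix as it is and asks for a rank lower bound valid for its entry pattern. If you pursue the linear-algebraic route, that rank question is the natural target; as of the paper it is open, so your proposal, as written, cannot be completed by existing results in the paper.
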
   
We have not been able  to find  an example of  a bisection-closed family of size $2n$ or more. 
}
\niranjan{
\rogers{The problem of determining a linear sized upper bound for the size of any bisection-closed family} leads us to pose the following question:
\rogers{\begin{openproblem} Suppose $0<a_1\le\cdots\le a_n$ are $n$ distinct reals. Let $\M_n(a_1,\ldots,a_n)$ denote the set of all symmetric matrices $M$ satisfying $m_{ij}\in\{a_i,a_j\}$ for $i\ne j$ and $m_{ii}=0$ for all $i$. Then, does there exist an absolute constant $c>0$ such that $rk(M)\ge cn,\textrm{\ for\ all\ }M\in\mathcal{M}_n(a_1,\ldots,a_n$)?
\end{openproblem}}}

\niranjan{\rogers{To see how this question ties in with our problem, suppose that a family $\F\subset\Pa([n])$ is a bisection closed family}, i.e., for $A, B\in\F$ and $A\ne B$ then $|A\cap B|\in\{|A|/2,|B|/2\}$. For simplicity, let us write $\F=\{A_1,\ldots,A_m\}$ and  denote $|A_i|=a_i$ where the $a_i$ are arranged in ascending order. We say $A$ bisects $B$ if $|A\cap B|=|B|/2$. For each $A\in\mathcal{F}$, let  $\you_A\in\bR^n$ where $\you_A(i)= 1$ if $i\in A$ and $-1$ if $i\not\in A$. Then note that 
\begin{eqnarray*} \langle \you_A,\you_B\rangle &=& n-2|A|\hspace{1cm}\textrm{\ if\ }A\textrm{\ bisects\ } B,\\
                                                                            &=& n-2|B|\hspace{1cm}\textrm{\ if\ }B\textrm{\ bisects\ } A,\\
                                               \parallel\you_A\parallel^2 &=& n.
                              \end{eqnarray*}
Consider the $m\times m$ matrix $M$ whose rows and columns are indexed by the members of $\F$, with $M_{A,B} =\langle \you_A,\you_B\rangle$. Then, since $M$ is a Gram matrix of vectors in $\bR^n$, it follows that \rogers{$rk(M)\le n$}. If $\mathcal{X}=\frac{1}{2}(nJ-M)$, where $J$ is the all ones matrix of order $m$, then \rogers{$rk(\mathcal{X})\le n+1$}. But note that $\mathcal{X}\in\M(a_1,\ldots,a_m)$. \rogers{So, if the answer to the aforementioned open problem is `yes', then $rk(\mathcal{X})\ge cm$}. This gives $cm\le r(\mathcal{X})\le n+1\textrm{\ which\ in\ turn\ gives\ } m\le c^{-1}(n+1)$. 
}

\niranjan{The problem of determining the maximum size of a fractional $L$-intersecting family is far from robust in the following sense. Suppose $L=\{1/2\}$ and we consider the problem of determining the size of an `$\ep$-approximately fractional $L$-intersecting family,' i.e., for any $A\neq B$ we have that at least one of $\frac{|A\cap B|}{|A|},\frac{|A\cap B|}{|B|}\in(1/2-\ep,1/2+\ep)$ for small $\ep>0$, then such families can in fact be exponentially large in size. \rogers{Let each set $A_i$ be chosen uniformly and independently at random from $\Pa([n])$. Then since each $|A_i|$ and $|A_i\cap A_j|$ are independent binomial $B(n,1/2)$ and $B(n,1/4)$ respectively, by standard Chernoff bounds (see \cite{MolloyReed}, chapter 5), it follows (by straightforward computations) that one can get such a family of cardinality at least $e^{2\ep^2 n/75}$.} In fact this same construction gives super-polynomial sized families even if $\ep=n^{-1/2+\delta}$ for any fixed $\delta>0$.}

\tapas{ Another interesting facet of the fractional intersection notion is the following extension of $l$-avoiding families \cite{MR871675,keevash2017frankl}
	\footnote{A family $\mathcal{F}$ is called $l$-avoiding if for each $A,B \in \mathcal{F}$, $|A \cap B| \neq l$ for some $l \in [n]$.}.
	A set $B$ \emph{bisects} another set $A$ if $|A \cap B| =\frac{|A|}{2} $.
	A family $\mathcal{F}$ of even subsets of $[n]$ is called \emph{fractional $(\frac{1}{2})$-avoiding} (or \emph{bisection-free}) if
	for every $A,B \in \mathcal{F}$, neither $B$ bisects $A$ nor $A$ bisects $B$ \rogers{(if we allow odd subsets in the definition of a fractional $(\frac{1}{2})$-avoiding family, then the set of all the odd-sized subsets on $[n]$ is an example of one such family)}.
	Let $\bar{\vartheta}(n)$ denote the maximum cardinality of a fractional $(\frac{1}{2})$-avoiding family on $[n]$.
	Let $A,B \subseteq [n]$ such that $|A| > \frac{2n}{3}$ and $|B| > \frac{2n}{3}$.
	It is not very hard to see that $|A \cap B| > n/3$ whereas $|A \cap ([n]\setminus B)| < n/3$.
	So, neither $A$ can bisect $B$ nor $B$ can bisect $A$.
	Therefore, if we construct a family \rogers{$\mathcal{F}=\{A \subseteq [n] | |A| > \frac{2n}{3},~|A| \mbox{ is even.}\}$}, 
	$\mathcal{F}$ is fractional $(\frac{1}{2})$-avoiding.
	Moreover, $|\mathcal{F}| = \rogers{\sum_{2\mid i,i=0}^{\frac{n}{3}-1} \binom{n}{i}} 
	> 1.88^n$, for sufficiently large $n$ (using Stirling's formula). \rogers{Let us now try to find an upper bound to the cardinality of a fractional $(\frac{1}{2})$-avoiding family.} 
	An application of a result of Frankl and R\'{o}dl \cite[Corollary 1.6]{MR871675} gives the following theorem for the cardinalities of 
	$l$-avoiding families as a corollary (see \cite[Theorem 1.1]{keevash2017frankl}).
	
	\begin{theorem}\cite{MR871675,keevash2017frankl}
		Let $\alpha,\epsilon \in (0,1)$ with $\epsilon \leq \frac{\alpha}{2}$. Let $k = \lfloor \alpha n \rfloor$ and
		$l \in [\max(0,2k-n) + \epsilon n,k-\epsilon n]$. Then any $l$-avoiding family $A \subseteq \binom{[n]}{k}$ satisfies
		$|A| \leq (1-\delta)^n \binom{n}{k}$ where $\delta=\delta(\alpha,\epsilon)> 0$.
		\label{thm:frankl1987}
	\end{theorem}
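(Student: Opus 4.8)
The plan is to obtain Theorem~\ref{thm:frankl1987} essentially for free from the Frankl--R\"{o}dl forbidden-intersection theorem, so that the only real task is to match parameters; I would not attempt an independent proof. By definition an $l$-avoiding family $A\subseteq\binom{[n]}{k}$ is one in which no two distinct members meet in exactly $l$ elements, i.e., a family with forbidden intersection $l$. Since any two $k$-subsets $A,B$ of $[n]$ satisfy $\max(0,2k-n)\le|A\cap B|\le k$, the nontrivial regime is the one in which $l$ lies in the interior of this window, at linear distance from both endpoints; this is exactly the hypothesis of \cite[Corollary~1.6]{MR871675}, whose conclusion is the exponential deficiency $|A|\le(1-\delta)^n\binom{n}{k}$.

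First I would verify the hypotheses. With $k=\lfloor\alpha n\rfloor$ one checks, using $\epsilon\le\alpha/2$, that the interval $\left[\max(0,2k-n)+\epsilon n,\ k-\epsilon n\right]$ is non-empty and that every $l$ in it lies at distance at least $\epsilon n$ from both $\max(0,2k-n)$ and $k$; hence any such $l$ falls in the admissible range of the Frankl--R\"{o}dl theorem, and the constant $\delta=\delta(\alpha,\epsilon)>0$ is precisely the one that theorem supplies. This is the deduction recorded in \cite[Theorem~1.1]{keevash2017frankl}, and invoking it is all that is needed for the present paper.

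The genuine difficulty lies inside \cite[Corollary~1.6]{MR871675}, so if one insisted on a self-contained argument the hard part would be re-deriving it. I would do so by the $\Delta$-system (sunflower) method: assuming $|A|$ is not already exponentially small in $n$, pass to a sub-family on which the Deza--Frankl--F\"{u}redi $\Delta$-system machinery produces a large ``homogeneous'' collection whose members share a fixed kernel and, away from the kernel, behave like the petals of a sunflower; a second-moment computation over a uniformly random pair of petals then locates two members whose intersection has size exactly $l$, contradicting the avoiding property. (A more recent alternative is to run the same scheme through spread or junta approximations.) The obstacle --- and the reason the saving is an exponential factor $(1-\delta)^n$ rather than a polynomial one, with $\delta$ forced to depend on both $\alpha$ and the slack $\epsilon$ --- is controlling the loss incurred at each stage of the $\Delta$-system reduction; taming that degradation is the heart of the Frankl--R\"{o}dl argument. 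For Theorem~\ref{thm:frankl1987} itself, however, no new work beyond the parameter check is required.
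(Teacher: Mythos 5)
Your proposal matches the paper exactly: the paper gives no proof of Theorem~\ref{thm:frankl1987}, treating it as an immediate consequence of \cite[Corollary~1.6]{MR871675} as recorded in \cite[Theorem~1.1]{keevash2017frankl}, which is precisely the parameter-matching deduction you describe. Your additional sketch of the $\Delta$-system argument behind the Frankl--R\"odl theorem is extra context, not required here.
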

	For any fractional $(\frac{1}{2})$-avoiding family $\mathcal{F}$, 
	any $\mathcal{F}' \subseteq \mathcal{F}$ consisting of sets of cardinality $l$ is $\frac{l}{2}$-avoiding.
	So, given any fractional $(\frac{1}{2})$-avoiding family $\mathcal{F}$, split $\mathcal{F}$ into families
	$\mathcal{F}_{\leq \frac{n}{3}-1}, \mathcal{F}_{\frac{n}{3}},\ldots,$  $\mathcal{F}_{2\frac{n}{3}}, \mathcal{F}_{\geq \frac{2n}{3}+1}$.
	From Theorem \ref{thm:frankl1987}, we know that each $\mathcal{F}_{i}$ has a cardinality at most $(1-\delta_i)^n \binom{n}{i}$ for $\frac{n}{3} \leq i \leq \frac{2n}{3}$. \rogers{Let $\delta=\min(\delta_{\frac{n}{3}},\ldots, \delta_{\frac{2n}{3}})$. Then $\sum_{i=\frac{n}{3}}^{\frac{2n}{3}}|\mathcal{F}_i| \leq ((1-\delta)2)^n$.
	Further, $|\mathcal{F}_{\leq \frac{n}{3}-1}| \leq \sum_{i=0}^{\frac{n}{3}-1} \binom{n}{i}$ and
	$|\mathcal{F}_{\geq \frac{2n}{3}+1}| \leq \sum_{i=0}^{\frac{n}{3}-1} \binom{n}{i} < 2^{nH(\frac{1}{3})} < 1.89^n$,
	where $H(\nu)=-\nu \log_2 \nu -(1-\nu)\log_2 (1-\nu)$ is the binary entropy function. Thus, for sufficiently large values of $n$, $1.88^n \leq \bar{\vartheta}(n) \leq ((1-\epsilon)2)^{n}$, for some $0 < \epsilon \leq 0.06$.} 
	
}

\bibliographystyle{plain}

\end{document}